\documentclass{article}
\usepackage{authblk}
\usepackage{etex}
\usepackage[utf8]{inputenc}
\usepackage[english]{babel}
\usepackage{amsmath}
\usepackage{amsthm}
\usepackage{amssymb}
\usepackage{sectsty}
\usepackage{titlesec}
\usepackage{color}
\usepackage[color,matrix,arrow]{xy}
\usepackage{amsgen}
\usepackage{amstext}
\usepackage{amsbsy}
\usepackage{amsopn}
\usepackage{amsfonts}
\usepackage{eepic}
\usepackage{graphicx}
\usepackage{epsf}
\usepackage{pstricks}
\usepackage{float}
\usepackage{enumerate}
\usepackage{eqnarray}
\usepackage{faktor}
\xyoption{all}

\usepackage{pgf}
\usepackage{tikz-cd}
\usetikzlibrary{automata, arrows.meta, positioning,decorations.pathmorphing}

\newcommand{\footrecall}[1]{%
} 
\usetikzlibrary{snakes,shapes,arrows,automata}

\titleformat*{\section}{\large\bfseries}
\titleformat*{\subsection}{\normalsize \bfseries}

\newcommand{\inv}{^{-1}}
\newcommand{\N}{\mathbb{N}}
\newcommand{\Z}{\mathbb{Z}}
\newcommand{\oD}{D_{\sup}}
\newcommand{\oDc}{D_{\sup}^*}

\newcommand{\sss}{\Sigma^*}

\newcommand{\Rec}{\text{Rec}}
\newcommand{\Rat}{\text{Rat}}

\newcommand{\Red}{\text{Red}}
\newcommand{\Sigmap}{\Sigma^{<p}}

\newcommand{\mc}{\mathcal}

\theoremstyle{definition}
\newtheorem{theorem}{Theorem}[section]
\newtheorem{corollary}[theorem]{Corollary}

\newtheorem{proposition}[theorem]{Proposition}
\newtheorem{question}[theorem]{Question}

\newtheorem{lemma}[theorem]{Lemma}

\newtheorem{remark}[theorem]{Remark}

\begin{document}
 
 
\title{A language-theoretic approach to study the density of subsets in free groups}

\author{André Carvalho}
\affil{Centro de Investigação em Matemática e Aplicações (CIMA)
	
	Departamento de Matemática, Escola de Ciências e Tecnologia da Universidade de Évora
	
	Rua Romão Ramalho, 59, 7000–671 Évora, Portugal
	
	\texttt{andre.carvalho@uevora.pt}\\}

\maketitle

\begin{abstract}
In this paper, we study the density of subsets of nonabelian free groups using relative densities of languages. We start by proving some basic properties about the density of a language $L_1$ relative to another language $L_2$ containing $L_1$. We then focus on the case where $L_2$ is the language  of freely reduced words over an alphabet and prove an analogue of the Infinite Monkey Theorem for this language. This result, obtained as a corollary of a broader theorem on irreducible subshifts of finite type, allows for a language-theoretic characterization of rational subsets with positive density. As a consequence, we obtain a language-theoretic proof that the automorphic orbit of an element of a nonabelian free group has natural density zero, which generalizes a result by Burillo and Ventura concerning the density of primitive elements of free groups. We then describe rational subsets of free groups of positive density and explore in depth the case of finitely generated subgroups. We prove that if the rational subset is a subgroup, then it has positive density if and only if it has finite index and characterize those for which there is convergence. In cases where convergence fails due to parity constraints, we show that the density of the subset always exhibits weak convergence and that the average of its  supremum and infimum densities converges to the expected value.
\end{abstract}

\section{Introduction}

Informally, the classical infinite monkey theorem states that a monkey typing randomly in a typewriter for an infinite amount of time will, with probability $1$, type any given string. Formally, for an alphabet $\Sigma$, defining the density of a language $L\subseteq \sss$ as $D(L)=\lim_{n\to \infty}\frac{|\Sigma^n\cap L|}{|\Sigma^n|}$, the theorem states that $D(\sss s \sss)=1$, for all $s\in \sss$. It is remarked in \cite{[Sin16]} that other definitions of density have been used in the past. For example, Berstel \cite{[Ber73]} and Salomaa-Soittola \cite{[SS78]} work with $D^*(L)=\lim_{n\to \infty}\frac{\sum_{i=0}^{n}|L\cap \Sigma^i|}{\sum_{i=0}^{n}|\Sigma^i|}$, but that these are equivalent, in the sense that $D^*(L)$ exists if and only if $D(L)$ exists and, in case they do, they are equal by the Stolz-Cesáro theorem and its partial converse \cite[Theorem 1.22 and 1.23]{[Mur09]}.

Sin'ya proved a converse of the infinite monkey theorem for regular languages, showing that a regular language $L$ has density $0$ if and only if there is some forbidden subword, that is, one that does not appear as a subword of any word in $L$ \cite{[Sin15],[Sin16]}. We remark that the converse of the infinite monkey theorem does not hold in general, as it suffices to take, for example, the language of palindromes, which has density $0$ but no forbidden subword.
While this is well-understood for the free monoid, the structure of the free group $F_k$ introduces local constraints given by cancellation.

In \cite{[BV02]}, the authors define the \emph{natural density} of a subset $S\subseteq G$ of a finitely generated group $G$ with finite generating set $X$ as $$\delta_X(S)=\limsup_{n\to \infty}\frac{|S\cap B_X(n)|}{|B_X(n)|},$$ where $B_X(n)$ represents the ball of radius $n$ (centered at the identity) with respect to $X$, and they show that the set of primitive elements of a free group has natural density $0$. Under some  assumptions on the growth of a group $G$, they also prove that subgroups $H$ of finite index have density $\frac{1}{[G:H]}$ and that, in that case, the limit always exists. However, free groups do not satisfy the hypotheses of the theorem and the authors present an example of a subgroup of index $2$ such that $\delta_X(H)=\frac 3 4$ and the $\liminf$ version of density yields $\frac 1 4$ (and so the limit may not exist).

In this paper, we explore the notion of relative density for languages $L_1 \subseteq L_2$, which measures the likelihood of a word of $L_2$ belonging to $L_1$, in the case where $L_2$ is the language of reduced words over an alphabet. This will provide us with a language-theoretical framework to study the natural density of subsets of the free group. The notion of relative densities of languages has been explored in the past in different contexts \cite{[Ber73],[Koz05]}.

We start by showing some basic properties on relative densities and prove a result on the densities of languages of subshifts of finite type, which will yield as a corollary an infinite monkey theorem holds for reduced words, meaning, informally, that a monkey typing reduced words for an infinite amount of time will eventually type any given reduced word with probability $1$. Formally, given two languages $L_1\subseteq L_2\subseteq \sss$, we define the density of $L_1$ relative to $L_2$ as 
$$D(L_1\mid L_2)=\lim_{n\to \infty}\frac{|\Sigma^n\cap L_1|}{|\Sigma^n\cap L_2|},$$
when it exists, and 
we denote by $\Red(X)$ the language of freely reduced words over $\Sigma=X\cup X\inv$.
We can also define $D_{\inf}$, $D_{\sup}$ to be the same notions, but replacing $\lim$ by $\liminf$ and $\limsup$, respectively. Also, as in the previous case, we write $D^*$ when the density is cumulative, that is, calculated on balls instead of spheres.

\newtheorem*{infinitemonkey reduced}{Corollary \ref{infinitemonkey reduced}}
\begin{infinitemonkey reduced}
	Let $X=\{x_1,\ldots, x_k\}$ and $L\subseteq \Red(X)$.  If $\sss s\sss\cap L=\emptyset$ for some $s\in \Red(X)$, then $D(L\mid \Red(X))=0$.
\end{infinitemonkey reduced}

Using the fact that every element of a nonabelian free group admit the existence of orbit-blocking words, that is, words that cannot appear as subwords of cyclically reduced elements in its automorphic orbit \cite{[KOOS26]}, we get a (simple) language-theoretic proof that automorphic orbits of elements have density $0$, generalizing Burillo-Ventura's result on primitive elements.

\newtheorem*{cor: shpilrain}{Corollary \ref{cor: shpilrain}}
\begin{cor: shpilrain}
	Let $g$ be an element of a nonabelian free group $F_k$ and $S'$ be the set of   images of $g$ under an automorphism of $F_k$.  Then $\delta(S')=0$.
\end{cor: shpilrain}

We then prove an analogue of Sin'ya's Theorem for rational languges of reduced words, that is, that the converse of the infinite monkey theorem holds for rational subsets of the free group:
\newtheorem*{sinya analogue}{Theorem \ref{sinya analogue}}
\begin{sinya analogue}
	Let $L\subseteq \Red(X)$ be a rational language and let $N=\{w\in \Red(X)\mid \sss w \sss \cap L \neq\emptyset\}$. Then,  the following are equivalent:
	\begin{enumerate}
		\item $\oDc(L\mid \Red(X))=0$;
		\item $\oD(L\mid\Red(X))=0;$
		\item  $D(L\mid \Red(X))=0;$
		\item  $D(N\mid \Red(X))=0;$
		\item  $\exists w\in \Red(X) : \sss w \sss \cap L=\emptyset.$
	\end{enumerate}
\end{sinya analogue}

Using this, we are able to give a language-theoretic approach to describe rational subsets of the free group with positive natural density and, in the particular case where the rational subset is a finitely generated subgroup, then it must be of finite index.

\newtheorem*{2cosets}{Theorem \ref{2cosets}}
\begin{2cosets}
	Let $S\subseteq F_X$ be a rational subset of a nonabelian free group.  Then $\delta(S)>0$ if and only if $F_X$ is there is some $k\in\N$ and finitely many elements $a_i,b_i\in F_X$, with $i\in [k]$ such that 
	$F_X=\bigcup_{i,j\in [k]} a_iSb_j$.
\end{2cosets}

\newtheorem*{cor: fi}{Corollary \ref{cor: fi}}
\begin{cor: fi}
	Let $H\leq_{f.g.} F_X$ be a finitely generated subgroup of a nonabelian free group.  Then $\delta(H)>0$ if and only if $H$ has finite index.
\end{cor: fi}

While it is known that random walks on groups measure index uniformly under an aperiodicity assumption \cite[Theorem 1.11]{[Toi20]}, the previous result does not hold when we change the notion of density replacing $\limsup$ by $\lim$ as, as mentioned above, there are finite index subgroups of the free groups for which we don't have convergence. We are able to characterize those and show that, although there is no convergence, the average of the $\limsup$ density with the $\liminf$ density is $\frac{1}{[F_n:H]}$. To do so, we consider non-backtracking random walks on the Stallings graph of $H$ and show that, if the subgroup has a word of odd length, then the walks are aperiodic and the limit exists and is $\frac{1}{[F_n:H]}$; if not, the walks have period $2$ since, in this case,  the Stallings graph is bipartite. In that case, the limit does not exist, but we have convergence \emph{on average}.

\newtheorem*{even bipartite}{Theorem \ref{average finite index}}
\begin{even bipartite}
	Let $L$ be the language of reduced words representing elements in a subgroup $H$ of a nonabelian free group $F_n$. Then $$\frac{\oD(L\mid\Red(X))+D_{\inf}(L\mid \Red(X))}{2}=\frac{1}{[F_n:H]}.$$ Moreover, $D(L\mid \Red(X))$ exists if and only if $H$ has infinite index or has at least a word of odd length.
\end{even bipartite}

Finally, we define the weak density of a subset $K$ of a group as $G$ as 
$$\Delta(K)=\lim_{n\to \infty}\frac{1}{n}\sum_{i=0}^{n-1}\frac{|B(i)\cap K|}{|B(i)|},$$ following \cite{[BPR09]}. We are then able to prove that convergence in this week sense is well-behaved for subgroups of the free group.

\newtheorem*{weak density}{Theorem \ref{weak density}}
\begin{weak density}
	Let $H$ be a subgroup of a nonabelian free group $F_k$. Then 
	\begin{align*}
		\Delta(H) = 
		\begin{cases}
			0               & \text{if } H \text{ has infinite index} \\
			\frac{1}{[F_k:H]} & \text{if } H \text{ has finite index}
		\end{cases}.
	\end{align*}
\end{weak density}

\section{Preliminaries}
Let $G=\langle A\rangle$ be a finitely generated group, $A$ be a finite generating set, $\Sigma=A\cup A^{-1}$ and $\pi:\Sigma^*\to G$ be the canonical (surjective) homomorphism. 

A subset $K\subseteq G$ is said to be \emph{rational} if there is some rational language $L\subseteq \Sigma^*$ such that $L\pi=K$ and \emph{recognizable} if $K\pi^{-1}$ is rational. 

We will denote by $\Rat(G)$ and $\Rec(G)$ the class of rational and recognizable subsets of $G$, respectively. Rational subsets generalize the notion of finitely generated subgroups.

\begin{theorem}[\cite{[Ber79]}, Theorem III.2.7]
	\label{AnisimovSeifert}
	Let $H$ be a subgroup of a group $G$. Then $H\in \Rat(G)$ if and only if $H$ is finitely generated.
\end{theorem}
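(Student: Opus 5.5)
The plan is to prove both directions of Theorem~\ref{AnisimovSeifert} (Anisimov--Seifert) directly from the definition of rational subsets. Throughout, $\pi:\Sigma^*\to G$ is the canonical homomorphism and $\Sigma=A\cup A^{-1}$.

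The converse direction is the easy one. Suppose $H=\langle h_1,\ldots,h_m\rangle$. For each generator choose a word $u_i\in\Sigma^*$ with $u_i\pi=h_i$, and a word $v_i$ with $v_i\pi=h_i^{-1}$. The language $L=\{u_1,\ldots,u_m,v_1,\ldots,v_m\}^*$ is rational, being the star of a finite set. Since $\pi$ is a homomorphism, $L\pi$ is the submonoid generated by the $h_i^{\pm1}$. In a group this submonoid is exactly $H$, so $H\in\Rat(G)$.

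For the direct direction, suppose $H=L\pi$ with $L$ recognized by a finite automaton $\mathcal A$. We may take $\mathcal A$ trim, with state set $Q$ and initial state $q_0$. Set $N=|Q|$ and let $F$ be the finite set of elements $w\pi$, where $w$ ranges over the words of length at most $2N$ labelling a path of $\mathcal A$. We claim that $H$ is generated by the finite set $S=H\cap F'$, where $F'=\{\,x\pi\,(y\pi)\,(z\pi)\,\}$ collects products of three such short path labels. To prove the claim, fix for each state $q$ a shortest word $\alpha_q$ labelling a path $q_0\to q$; then $|\alpha_q|<N$. Also fix a shortest word $\beta_q$ labelling a path from $q$ to some terminal state; then $|\beta_q|<N$. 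Now take any $w=a_1\cdots a_n\in L$ with accepting run $q_0,q_1,\ldots,q_n$. Define
$$g_i=(\alpha_{q_{i-1}}a_i\beta_{q_i})\pi\,\bigl((\alpha_{q_i}\beta_{q_i})\pi\bigr)^{-1}.$$
The words $\alpha_{q_{i-1}}a_i\beta_{q_i}$ and $\alpha_{q_i}\beta_{q_i}$ are both accepted, so both lie in $H$, and hence $g_i\in H$. Writing $h_i=(\alpha_{q_i}\beta_{q_i})\pi\in H$, the product $g_1h_1\cdots$ telescopes in the standard way. More precisely, $w\pi$ equals a product of factors of the form $(\alpha_{q_{i-1}}a_i\beta_{q_i})\pi\,(\beta_{q_i}\pi)^{-1}(\alpha_{q_i}\pi)^{-1}$ together with boundary terms. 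Each such factor is an element of $H$ of bounded length, because it equals $(\alpha_{q_{i-1}}a_i\beta_{q_i})\pi\cdot h_i^{-1}\cdot(\alpha_{q_i}\beta_{q_i}\beta_{q_i}^{-1}\alpha_{q_i}^{-1})\pi$. So the cleaner choice is to take as generators the elements $k_{p,a,q}=(\alpha_p a\,\alpha_q^{-1})\pi$ for each transition $p\xrightarrow{a}q$. We would then show that these lie in $H$, and that $w\pi=k_{q_0,a_1,q_1}\cdots k_{q_{n-1},a_n,q_n}\,(\alpha_{q_n})\pi$.

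The main obstacle is the membership $k_{p,a,q}\in H$ together with the final boundary term, since $\alpha_q^{-1}$ need not label a path. I would handle this by rewriting
$$k_{p,a,q}=(\alpha_p a\beta_q)\pi\,\bigl((\alpha_q\beta_q)\pi\bigr)^{-1},$$
which is a product of two elements of $H$. Likewise the boundary term $\alpha_{q_n}\pi$ can be taken in $H$ by choosing $\beta_{q_n}$ empty when $q_n$ is terminal. Since there are finitely many transitions, $H$ is finitely generated.
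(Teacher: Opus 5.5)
The paper gives no proof of this statement (it only cites Berstel), but your final argument is correct and is the standard Anisimov--Seifert proof. The elements $k_{p,a,q}=(\alpha_p a\beta_q)\pi\,\bigl((\alpha_q\beta_q)\pi\bigr)^{-1}$ lie in $H$ and telescope along an accepting run because $\alpha_{q_0}$ is empty, and together with the finitely many elements $\alpha_t\pi$ for terminal $t$ they generate $H$. The opening detour through $F$, $F'$ and the claim that $S=H\cap F'$ generates $H$ is never justified, and the telescoping product is misstated as ``$g_1h_1\cdots$'' where it should be $g_1\cdots g_n\,\alpha_{q_n}\pi$; delete that part and present only the $k_{p,a,q}$ argument, which already does the job.
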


Similarly, recognizable subsets generalize the notion of finite index subgroups.

\begin{proposition}
	\label{rec fi}
	Let $H$ be a subgroup of a group $G$. Then $H\in \Rec(G)$ if and only if $H$ has finite index in $G$.
\end{proposition}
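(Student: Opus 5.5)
The plan is to prove both directions directly from the definition of recognizability through the canonical surjection $\pi:\Sigma^*\to G$, using the correspondence between finite monoid quotients and finite-index subgroups.

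For the direction ``finite index $\Rightarrow$ recognizable'', suppose $[G:H]=m<\infty$. Let $\Omega=\{Hg\mid g\in G\}$ be the finite set of right cosets. I will build a finite deterministic automaton with state set $\Omega$, initial state $H$, and a unique final state $H$, and with transitions $Hg\xrightarrow{a}Hg(a\pi)$ for $a\in\Sigma$. These transitions are well defined because right multiplication permutes the right cosets. A straightforward induction on $|w|$ shows that reading $w\in\Sigma^*$ from $H$ ends at the state $H(w\pi)$. Hence $w$ is accepted if and only if $w\pi\in H$, so $H\pi^{-1}$ is the language of this finite automaton. It is therefore rational, and $H\in\Rec(G)$.

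For the converse, suppose $H\pi^{-1}$ is rational. Take a finite deterministic complete automaton $\mathcal A$ recognizing it, with state set $Q$, and write $q\cdot w$ for the action of words on states. Consider the map $\phi: G\to Q^{Q}$ given by $g\mapsto$ the transformation induced by some word $w$ with $w\pi=g$. This map is not obviously well defined, so instead I will argue with cosets directly. If $u,v\in\Sigma^*$ satisfy $q_0\cdot u=q_0\cdot v$, then for every word $w$ we have $uw\in H\pi^{-1}\iff vw\in H\pi^{-1}$.

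I will show this forces $(u\pi)H=(v\pi)H$. Since $\pi$ is surjective and $\Sigma$ contains inverses, I can choose $w$ with $w\pi=(u\pi)^{-1}$. Then $uw\pi=1\in H$, so $(v\pi)(u\pi)^{-1}\in H$. Interpreted properly this compares right cosets: $H(v\pi)=H(u\pi)$. More carefully, $v\pi\cdot w\pi\in H$ gives $v\pi\in H(w\pi)^{-1}=H(u\pi)$.

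The map $u\mapsto q_0\cdot u$ therefore factors so that distinct right cosets $Hg$, represented by words $u$ with $u\pi=g$, are sent to distinct states. If $H(u\pi)\neq H(v\pi)$, then $q_0\cdot u\neq q_0\cdot v$ by the contrapositive of the argument above. Thus the number of right cosets is at most $|Q|<\infty$, and $H$ has finite index.

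The main subtlety, rather than a real obstacle, is getting left versus right cosets consistent with the convention that $\pi$ is a homomorphism and words are read left to right. It is also worth noting that the index of right and of left cosets coincide, so the choice does not matter for the conclusion. I would state the key step as the contrapositive: if infinitely many right cosets exist, then by pigeonhole two words $u,v$ in different cosets reach the same state. Appending $w$ with $w\pi=(u\pi)^{-1}$ then gives one word in $H\pi^{-1}$ and one not, both reaching the same state, which is a contradiction.
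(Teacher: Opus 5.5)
Your proof is correct in both directions. The coset automaton is well defined because right multiplication by $a\pi$ permutes the right cosets. In the converse, choosing $w$ with $w\pi=(u\pi)^{-1}$ correctly shows that words lying in distinct right cosets must reach distinct states, so $[G:H]\le |Q|$. Only surjectivity of $\pi$ is used there, and the abandoned map $G\to Q^{Q}$ can simply be deleted.

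The paper gives no proof of this proposition. It quotes it as a standard fact in the preliminaries, alongside the more general remark that recognizable subsets are exactly finite unions of cosets of a finite-index subgroup.

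Your argument is the standard one, and it can be compressed into a single Myhill--Nerode computation. For $L=H\pi^{-1}$ one has $u^{-1}L=\bigl((u\pi)^{-1}H\bigr)\pi^{-1}$. Since $\pi$ is surjective, $u^{-1}L=v^{-1}L$ if and only if $H(u\pi)=H(v\pi)$, so $L$ has exactly $[G:H]$ left quotients. This gives both implications at once. It also shows that your coset automaton, the Schreier graph of $H$, is precisely the minimal automaton of $H\pi^{-1}$.
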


In fact, if $G$ is a group and $K$ is a subset of $G$ then $K$ is recognizable if and only if $K$ is a (finite) union of cosets of a subgroup of finite index.

In case the group $G$ is a free group with basis $A$ with surjective homomorphism $\pi:\Sigma^*\to G$, given  $L\subseteq \Sigma^*$, we define the set of reduced words representing elements in $L\pi$ by $$\overline L=\{w\in \Sigma^*\mid w \text{ is reduced and there exists $u\in L$ such that $u\pi=w\pi$} \}.$$ 
Benois' Theorem provides us with a useful characterization of rational subsets in terms of reduced words representing the elements in the subset.

\begin{theorem}\cite[Benois]{[Ben79]}
	Let $F$ be a finitely generated free group with basis $A$ and let $L \subseteq \Sigma^*$. Then $\overline{L}$ is a rational language of $\Sigma^*$ if and only if $L\pi$ is a rational subset of $F$. 
\end{theorem}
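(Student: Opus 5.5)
The plan is to prove both directions of Benois' Theorem by working with finite automata and the free reduction process.

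\emph{The easy direction.} If $\overline L$ is rational, then $\overline L\pi = L\pi$. Indeed, every word of $L$ is equivalent in $F$ to a unique reduced word, which lies in $\overline L$. Conversely, every word of $\overline L$ represents an element of $L\pi$ by definition. Hence $L\pi$ is the image of a rational language, so it is rational by definition.

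\emph{The substantial direction.} Suppose $L\pi$ is rational. Pick a rational language $L'\subseteq \sss$ with $L'\pi = L\pi$; then $\overline{L'}=\overline L$, since $\overline{\,\cdot\,}$ depends only on the image in $F$. So we may assume $L$ is rational, recognized by a finite automaton $\mathcal A$ over $\Sigma$. We then saturate $\mathcal A$ under cancellation, with the following procedure.
\begin{enumerate}
\item Adjoin $\varepsilon$-transitions: whenever there is a path $p \xrightarrow{a} q \xrightarrow{a\inv} r$ (possibly through already-added $\varepsilon$-edges), add an edge $p\xrightarrow{\varepsilon} r$.
\item Iterate step 1. The state set is finite, so only finitely many edges can ever be added, and the process terminates in a finite automaton $\mathcal A'$.
\end{enumerate}
Every added edge reads a word trivial in $F$. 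Hence the language $L(\mathcal A')$ satisfies $L\subseteq L(\mathcal A')$ and $L(\mathcal A')\pi = L\pi$.

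\emph{The key claim.} We will show $\overline L = L(\mathcal A')\cap \Red(A)$. Since $\Red(A)=\sss\setminus\bigcup_{a\in\Sigma}\sss aa\inv\sss$ is rational and rational languages are closed under intersection, this gives the result.
\begin{itemize}
\item The inclusion $\supseteq$ is immediate: a reduced word accepted by $\mathcal A'$ represents an element of $L(\mathcal A')\pi = L\pi$.
\item For $\subseteq$, take $u\in L$ read along a successful path of $\mathcal A$, and let $w$ be its reduced form. Induct on the number of elementary cancellations $xaa\inv y\to xy$ used to reduce $u$. The invariant is that each intermediate word is the label of a successful path in $\mathcal A'$ (with $\varepsilon$-edges ignored in the label). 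When a factor $aa\inv$ is deleted, the corresponding path segment $p\xrightarrow{a}\cdots\xrightarrow{a\inv}r$, possibly with $\varepsilon$-edges interspersed, is replaced by the saturation edge $p\xrightarrow{\varepsilon}r$. This keeps the path successful, and so $w\in L(\mathcal A')$.
\end{itemize}

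\emph{Main obstacle.} The delicate point is making the saturation rule strong enough for this induction. Letters $a$ and $a\inv$ that cancel at some stage may be separated in the original path by segments whose labels themselves reduce to the empty word. So the rule must be stated as: add $p\xrightarrow{\varepsilon} r$ whenever there is a path from $p$ to $r$ labelled $a\,\varepsilon^*a\inv$, with $\varepsilon$-edges allowed anywhere, or equivalently whenever there is a path whose label is trivial in $F$.

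With the rule in this form, each elementary cancellation corresponds exactly to one available saturation edge, and the induction goes through. Termination remains clear because edges are only ever added between a fixed finite set of states.
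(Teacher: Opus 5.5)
The paper does not prove this statement: it quotes it from \cite{[Ben79]} and uses it as a black box, so there is no internal argument to compare with. Your proof is the standard one, and it is correct. You saturate a finite automaton for a rational preimage of $L\pi$ with $\varepsilon$-transitions that shortcut cancelling factors $a\varepsilon^*a^{-1}$, and then intersect with $\Red(A)$. Your point that the rule must allow $\varepsilon$-edges between the two cancelling letters is right.

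The only sentence to tighten is ``every added edge reads a word trivial in $F$''. Read literally it gives nothing, since every $\varepsilon$-edge reads a trivial word. Adding arbitrary $\varepsilon$-edges can enlarge $L(\mathcal A')\pi$, for instance an $\varepsilon$-edge from the initial state to a final state. What you actually need is that every added edge $p\xrightarrow{\varepsilon}r$ shortcuts a path $p\to r$ of $\mathcal A$ whose label is trivial in $F$. Your construction does provide this, by induction on the order in which edges are added. It follows that every successful path of $\mathcal A'$ labelled $v$ lifts to a successful path of $\mathcal A$ labelled by a word equal to $v$ in $F$. That is what gives $L(\mathcal A')\pi\subseteq L\pi$, and hence the inclusion $L(\mathcal A')\cap\Red(A)\subseteq\overline L$.
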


Let $\Sigma$ be an alphabet. We define the density of a language $L\subseteq \Sigma^*$ as $$D(L)=\lim_{n\to \infty} \frac{|L\cap \Sigma^n|}{|\Sigma^n|},$$ when it exists. Analogously, we define $D_{\sup}(L)=\limsup_{n\to \infty} \frac{|L\cap \Sigma^n|}{|\Sigma^n|}$  and $D_{\inf}(L)=\liminf_{n\to \infty} \frac{|L\cap \Sigma^n|}{|\Sigma^n|}$.
It follows easily from the definitions that:
\begin{enumerate}
\item if $L_1\subseteq L_2$ and $D(L_1)=1$, then $D(L_2)=1$;
\item if $D(L)$ exists, then $D(\sss \setminus L)=1- D(L)$.
\end{enumerate}

As mentioned in the introduction, using \cite[Theorems 1.22 and 1.23]{[Mur09]}, it can be seen that this coincides with the following definition:
$$D^*(L)=\lim_{n\to \infty}\frac{\sum_{i=0}^{n}|L\cap \Sigma^i|}{\sum_{i=0}^{n}|\Sigma^i|}.$$

The infinite monkey theorem states that, for any $s\in \sss$, we have $D(\sss s \sss)=1$. Using the properties above, the following theorem follows easily from the infinite monkey theorem.
\begin{theorem}
Let $L\subseteq \sss$. If $\sss s\sss\cap L=\emptyset$ for some $s\in \sss$, then $D(L)=0$.
\end{theorem}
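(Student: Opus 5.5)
The plan is to pass to complements and combine the two elementary properties listed just before the statement with the infinite monkey theorem. Fix $s\in\sss$ with $\sss s\sss\cap L=\emptyset$. This hypothesis is exactly the inclusion
$$\sss s\sss\subseteq \sss\setminus L .$$

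First, the infinite monkey theorem gives $D(\sss s\sss)=1$. By property 1 (monotonicity of density one), applied to $L_1=\sss s\sss$ and $L_2=\sss\setminus L$, we obtain $D(\sss\setminus L)=1$, and in particular this density exists.

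Second, apply property 2 to the language $\sss\setminus L$, whose density now exists. Its complement in $\sss$ is $L$, so property 2 yields $D(L)=1-D(\sss\setminus L)=0$.

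There is no real obstacle. The only point to be careful about is that property 2 requires the density of the language being complemented to exist, and we have this from the first step. If one wanted to avoid the listed properties entirely, the same conclusion follows from the squeeze
$$0\le \frac{|L\cap\Sigma^n|}{|\Sigma^n|}\le 1-\frac{|\sss s\sss\cap\Sigma^n|}{|\Sigma^n|}\to 0 .$$
This version also shows directly that $D(L)$ exists and equals $0$.
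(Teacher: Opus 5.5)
Your proof is correct and follows the paper's route exactly. The paper gives no detailed proof; it only remarks that the statement follows easily from the infinite monkey theorem together with the two listed properties (monotonicity of density one, and complementation). That is precisely your chain $\Sigma^* s\Sigma^*\subseteq \Sigma^*\setminus L$, hence $D(\Sigma^*\setminus L)=1$, hence $D(L)=0$, and your care about existence before applying property 2 and the direct squeeze are both sound.
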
 
The converse of this theorem does not hold in general, but it does if $L$ is rational (see \cite{[Sin16], [Sin15]}), as illustrated by the following theorem:

\begin{theorem} \label{sinya}
Let $L\subseteq \sss$ be a rational language. 
The following are equivalent:
\begin{itemize}
\item $\sss s\sss\cap L=\emptyset$ for some $s\in \sss$;
\item $D(L)=0$.
\end{itemize}
\end{theorem}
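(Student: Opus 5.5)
One direction is the infinite monkey consequence stated just above. If $\sss s\sss\cap L=\emptyset$ for some $s$, then $L\subseteq \sss\setminus \sss s\sss$. The infinite monkey theorem gives $D(\sss s\sss)=1$, so the complement has density $0$, and therefore $0\le \frac{|L\cap\Sigma^n|}{|\Sigma^n|}\le \frac{|(\sss\setminus\sss s\sss)\cap \Sigma^n|}{|\Sigma^n|}\to 0$. This direction needs no rationality.

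For the converse, suppose $L$ is rational and every $s\in\sss$ occurs as a factor of some word of $L$. We must show that $D(L)=0$ fails, i.e.\ that $\limsup_n |L\cap\Sigma^n|/|\Sigma|^n>0$. In fact we aim to show that $D(L)$ either does not exist or is positive.

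Fix a trim deterministic automaton $\mathcal A$ for $L$ with state set $Q$, $|Q|=m$. Every state is accessible and coaccessible. The key step is to find a state $q$ whose \emph{ideal} structure is complete, meaning $q\cdot u$ is defined for every $u\in\sss$ and lands in the same strongly connected component. The plan is as follows.
\begin{itemize}
\item Order the states by reachability and use the factor hypothesis with a carefully chosen word. Enumerate pairs (state, word) that get ``killed'' (undefined transitions or moves to a lower component). Build by concatenation a word $s=s_1s_2\cdots$ such that reading $s$ from any state either dies or strictly descends in the component order.
\item If no state had a complete, strongly connected behaviour, then repeatedly appending such killing words would produce a word $s$ (of length bounded in terms of $m$) that cannot be read along any accepting path, because an accepted word passes through at most $m$ components. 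This would make $s$ a forbidden factor of $L$, a contradiction.
\item Hence some state $q$ exists, together with a strongly connected component $C\ni q$, such that $C$ is closed under all letters. That is, $C$ with the full alphabet is a complete automaton on which $\sss$ acts.
\end{itemize}
This component construction is where I expect the main difficulty. One must handle partial transitions, and one must ensure that the descent argument produces a single word defeating all states simultaneously. That is done by processing the states one at a time and appending to the current word, using that descents can happen at most $m$ times.

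Given such a $C$, fix $u$ reaching $q$ from the initial state and, for each $p\in C$, a word $v_p$ of length at most $m$ leading from $p$ to a final state. Each $p\in C$ is coaccessible by trimness, and the path stays in or leaves $C$ harmlessly. Then for each $n$ and each $w\in\Sigma^n$, the word $u\,w\,v_{q\cdot uw}$ lies in $L$. The map $w\mapsto u w v_{q\cdot w}$ is at most $m$-to-one onto its image within length range $[n+|u|,n+|u|+m]$. It is in fact injective on each fixed suffix choice, since $w$ is determined by length. Summing over the at most $m+1$ lengths, some length $N\in[n+|u|,n+|u|+m]$ satisfies
\[
|L\cap\Sigma^N|\ge \frac{|\Sigma|^n}{m(m+1)},
\]
so $|L\cap\Sigma^N|/|\Sigma|^N\ge c=|\Sigma|^{-|u|-m}/(m(m+1))>0$ for infinitely many $N$. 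Thus $D_{\sup}(L)>0$, and $D(L)=0$ is impossible. This completes the equivalence.
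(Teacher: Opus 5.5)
Your argument is correct, but it takes a different route from the one the paper relies on. The paper does not prove Theorem~\ref{sinya} itself. It cites Sin'ya and Koga, and Theorem~\ref{sinya analogue} replays Koga's argument for reduced words.

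In Koga's argument, Lemma~\ref{lemma koga} gives every factor $s$ of $L$ short contexts $a,b\in\Sigmap$ with $asb\in L$; these contexts may depend on $s$. So the factor set $N$ lies in the finite union $\bigcup_{a,b\in\Sigmap}a^{-1}Lb^{-1}$. Each quotient has upper density at most $|\Sigma|^{|a|+|b|}\oD(L)$, so $D(L)=0$ forces $D(N)=0$, and hence $N\neq\sss$.

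You instead look for a \emph{uniform} context: one prefix $u$ leading to a state $q$ from which every word can be read, plus bounded suffixes. This buys an explicit constant $c$ and a concrete subset $\{uwv_{q\cdot w}\}\subseteq L$. The cost is the killing-word construction you flagged as the main difficulty. Koga's route avoids it entirely, because the union bound absorbs the finitely many context choices, and the bounded-context lemma is immediate from shortest access and coaccess words in a DFA.

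Your construction can also be simplified. You need neither strongly connected components nor descents, only a state $q$ with $q\cdot w$ defined for every $w$. Suppose every state $p$ had a word $w_p$ with $p\cdot w_p$ undefined. Pick some $p$ on which the current word $s$ is still defined and append $w_{p\cdot s}$. Each such step strictly shrinks the set of states on which $s$ is defined. After at most $m$ steps you get a word readable from no state, i.e.\ a forbidden factor.

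Trimness then supplies the suffixes, and your counting goes through. In fact $w\mapsto uwv_{q\cdot w}$ is injective on $\Sigma^n$, since $w$ is the length-$n$ factor right after $u$, so the extra factor $m$ in your denominator is unnecessary. Also, $v_{q\cdot uw}$ should read $v_{q\cdot w}$.

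Finally, Koga's route transfers verbatim to relative density in $\Red(X)$ via Lemma~\ref{lema inversos}. Your uniform-state argument would need reworking there: the concatenations must stay reduced, and no state of a trim automaton for a language of reduced words reads all of $\sss$.
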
 

 A simple self-contained proof of Theorem \ref{sinya} was given in \cite{[Kog19]}.
 
 \section{Relative densities}
 The purpose of this section is to prove some basic properties about relative densities.
 
 Given two languages $L_1\subseteq L_2\subseteq \sss$, we define the density of $L_1$ relative to $L_2$ as 
 $$D(L_1\mid L_2)=\lim_{n\to \infty}\frac{|\Sigma^n\cap L_1|}{|\Sigma^n\cap L_2|},$$
 when it exists.
 Similar to the case of the densities, we define  $\oD(L_1\mid L_2)=\limsup_{n\to \infty}\frac{|\Sigma^n\cap L_1|}{|\Sigma^n\cap L_2|},$ and $D_{\inf}(L_1\mid L_2)=\liminf_{n\to \infty}\frac{|\Sigma^n\cap L_1|}{|\Sigma^n\cap L_2|}$. 
 
 Analogous to the simple case, we can define the cumulative density relative to a language $L_2$ as $$D^*(L_1\mid L_2)=\lim_{n\to+\infty}\frac{\sum_{i=0}^n|\Sigma^i\cap L_1|}{\sum_{i=0}^n |\Sigma^i\cap L_2|}.$$
 In the cases where $L_2\cap \Sigma^n\neq \emptyset$ for all $n\in \N$ and $$\lim_{n\to+\infty} \frac{\sum_{i=0}^n |\Sigma^i\cap L_2|}{\sum_{i=0}^{n+1} |\Sigma^i\cap L_2|}=b$$ for a real number $b\neq 1$, we can still apply \cite[Theorems 1.22 and 1.23]{[Mur09]} to get that $D(L_1\mid L_2)$ exists if and only if $D^*(L_1\mid L_2)$ exists and, in case they exist, the two notions coincide. Since we will be dealing mostly with the case where $L_2$ is the language of reduced words in a free group, the two notions are interchangeable to our purpose. However, the same is not true for $D_{\sup}(L_1\mid L_2)$ (resp. $D_{\inf}(L_1\mid L_2)$) and $D_{\sup}^*(L_1\mid L_2)$ (resp. $D_{\inf}^*(L_1\mid L_2)$), as it will be clear later.

 \begin{lemma}\label{dod}
 	Let $L_1,L_2, L_3\subseteq \sss$ be languages such that $L_1\cup L_2\subseteq L_3$. Then the following hold:
 	\begin{itemize}
 		\item $\oD(L_1\mid L_3)=0\iff D(L_1\mid L_3)=0$;
 		\item $\oD(L_1\cup L_2\mid L_3)\leq \oD(L_1\mid L_3)+\oD(L_2\mid L_3)$.
 	\end{itemize} 
 \end{lemma}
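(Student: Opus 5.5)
Both items follow directly from the definitions. Write $a_n=|\Sigma^n\cap L_1|$, $b_n=|\Sigma^n\cap L_2|$ and $c_n=|\Sigma^n\cap L_3|$. Implicitly we work with those $n$ for which $c_n\neq 0$ (equivalently, with the convention $0/0=0$), so the ratios $a_n/c_n$ make sense. Since $L_1\subseteq L_3$, these ratios lie in $[0,1]$.

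For the first item, the ratios $a_n/c_n$ are nonnegative, so
$$0\le \liminf_{n\to\infty}\frac{a_n}{c_n}\le\limsup_{n\to\infty}\frac{a_n}{c_n}.$$
If $\oD(L_1\mid L_3)=0$, then $\liminf$ and $\limsup$ both equal $0$. Hence the limit exists and $D(L_1\mid L_3)=0$. Conversely, if the limit exists and equals $0$, then the $\limsup$ coincides with it, so $\oD(L_1\mid L_3)=0$.

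For the second item, first observe the pointwise bound
$$|\Sigma^n\cap(L_1\cup L_2)|\le a_n+b_n,$$
which gives
$$\frac{|\Sigma^n\cap(L_1\cup L_2)|}{c_n}\le\frac{a_n}{c_n}+\frac{b_n}{c_n}.$$
Taking $\limsup$ on both sides and using subadditivity of $\limsup$ for bounded real sequences, $\limsup(x_n+y_n)\le\limsup x_n+\limsup y_n$, yields the stated inequality.

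There is no real obstacle here. The only point requiring care is the convention for lengths $n$ with $\Sigma^n\cap L_3=\emptyset$. These are either finitely many, or handled by the convention $0/0=0$; in either case they affect neither $\limsup$ nor $\lim$ in the relevant way.
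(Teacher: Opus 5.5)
Your proof is correct and follows the paper's argument: the squeeze $0\le\liminf\le\limsup$ on the nonnegative ratio sequence gives the first item, and the pointwise bound $|\Sigma^n\cap(L_1\cup L_2)|\le|\Sigma^n\cap L_1|+|\Sigma^n\cap L_2|$ combined with subadditivity of $\limsup$ gives the second. Your remark about lengths $n$ with $\Sigma^n\cap L_3=\emptyset$ is extra care that the paper leaves implicit.
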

 \begin{proof}
 	It is clear that $ D(L_1\mid L_3)$ implies $\oD(L_1\mid L_3)=0$. The converse follows from the fact that the sequence $(a_n)_n$, where $a_n=\frac{|\Sigma^n\cap L_1|}{|\Sigma^n\cap L_3|}$ is nonnegative, so if $\limsup_{n\to \infty}a_n=0$, then 
 	$0\leq \liminf_{n\to \infty} a_n\leq \limsup_{n\to \infty}a_n=0$, so $\lim_{n\to \infty}a_n=0$.
 	
 	The second claim follows easily from the fact that $|\Sigma^n\cap (L_1\cup L_2)|\leq |\Sigma^n\cap L_1| + |\Sigma^n\cap L_2|.$
 \end{proof}
 \begin{lemma}
 Let $L_1,L_2\subseteq \sss$ be languages such that $L_1\subseteq L_2$. Then, $D(L_1\mid L_2)=0\iff D(L_2\setminus L_1\mid L_2)=1$.
 \end{lemma}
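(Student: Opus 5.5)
The argument is a direct computation with the ratio sequences, as in the proof of Lemma \ref{dod}. Since $L_1\subseteq L_2$, for every $n$ with $\Sigma^n\cap L_2\neq\emptyset$ the sets $\Sigma^n\cap L_1$ and $\Sigma^n\cap(L_2\setminus L_1)$ partition $\Sigma^n\cap L_2$. Writing
$$a_n=\frac{|\Sigma^n\cap L_1|}{|\Sigma^n\cap L_2|},\qquad b_n=\frac{|\Sigma^n\cap (L_2\setminus L_1)|}{|\Sigma^n\cap L_2|},$$
we therefore have $a_n+b_n=1$ for all such $n$, i.e.\ $b_n=1-a_n$.

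For the forward direction, I will assume $D(L_1\mid L_2)=0$, so $a_n\to 0$. Then $b_n=1-a_n\to 1$, which gives $D(L_2\setminus L_1\mid L_2)=1$. For the converse, if $b_n\to 1$ then $a_n=1-b_n\to 0$, so $D(L_1\mid L_2)=0$. In both directions the existence of one limit forces the existence of the other, so no $\limsup$/$\liminf$ argument is needed.

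The only real subtlety is bookkeeping. The ratios are defined only for those $n$ with $\Sigma^n\cap L_2\neq\emptyset$, and the limits in the definition of relative density are implicitly taken along these $n$. Both densities are indexed by the same set of lengths, because the denominators coincide, so the identity $a_n+b_n=1$ holds along exactly that sequence and the argument goes through verbatim. I expect no genuine obstacle. More generally, this computation shows that $D(L_2\setminus L_1\mid L_2)=1-D(L_1\mid L_2)$ whenever either side exists.
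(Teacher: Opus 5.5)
Your proof is correct and takes essentially the paper's approach: the paper uses the same identity $\frac{|\Sigma^n\cap (L_2\setminus L_1)|}{|\Sigma^n\cap L_2|}=1-\frac{|\Sigma^n\cap L_1|}{|\Sigma^n\cap L_2|}$, written out as an explicit $\varepsilon$--$N$ argument in each direction. Your packaging via $a_n+b_n=1$ on a common index set is a bit cleaner, and it directly yields $D(L_2\setminus L_1\mid L_2)=1-D(L_1\mid L_2)$ whenever either density exists.
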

 \begin{proof}
 Suppose that $D(L_1\mid L_2)=0,$ that is, that
 $$\forall \varepsilon >0 \; \exists N_\varepsilon \in \N: \; \forall n>N_\varepsilon, \; \frac{|\Sigma^n\cap L_1|}{|\Sigma^n\cap L_2|}<\varepsilon.$$
 
 Let $\varepsilon >0$ and take $n>N_\varepsilon$. We have that 
 \begin{align*}
 \frac{|\Sigma^n\cap (L_2\setminus L_1)|}{|\Sigma^n\cap L_2|}=\frac{|\Sigma^n\cap L_2|-|\Sigma^n \cap L_1|}{|\Sigma^n\cap L_2|}=1-\frac{|\Sigma^n\cap L_1|}{|\Sigma^n\cap L_2|}>1-\varepsilon.
 \end{align*}
 Hence,
  $$\forall \varepsilon >0 \; \exists N_\varepsilon \in \N: \; \forall n>N_\varepsilon, \; \frac{|\Sigma^n\cap (L_2\setminus L_1)|}{|\Sigma^n\cap L_2|}>1-\varepsilon.$$
  Since  $\frac{|\Sigma^n\cap (L_2\setminus L_1)|}{|\Sigma^n\cap L_2|}<1$, for all $n\in \N$, it follows that $D(L_2\setminus L_1\mid L_2)=1$.
  
  Now, suppose that $D(L_2\setminus L_1\mid L_2)=1$. Then,
 $$\forall \varepsilon >0 \; \exists N_\varepsilon \in \N: \; \forall n>N_\varepsilon, \; \frac{|\Sigma^n\cap (L_2\setminus L_1)|}{|\Sigma^n\cap L_2|}>1-\varepsilon.$$
 Let $\varepsilon >0$ and take $n>N_\varepsilon$. We have that 
 \begin{align*}
 \frac{|\Sigma^n\cap L_1|}{|\Sigma^n\cap L_2|}=1- \left(\frac{|\Sigma^n\cap L_2|}{|\Sigma^n\cap L_2|} - \frac{|\Sigma^n\cap L_1|}{|\Sigma^n\cap L_2|}\right)
 =1- \frac{|\Sigma^n\cap (L_2\setminus L_1)|}{|\Sigma^n\cap L_2|}>1-(1-\varepsilon)=\varepsilon,
 \end{align*}
 and so $D(L_1\mid L_2)=0.$
 \end{proof}
 
 We now prove that, if a language $L_1$ has density $1$ and another language $L_2$ is \emph{visible}, in the sense that it has positive density, then the intersection $L_1\cap L_2$ has density $1$ in $L_2$, and that that is not the case if $L_2$ has density $0$.
 \begin{proposition}\label{intersect}
Let $L_1,L_2\subseteq \sss$ be languages. If $D(L_1)=1$ and $D(L_2)=a>0$, then $D(L_1\cap L_2\mid L_2)=1$.
 \end{proposition}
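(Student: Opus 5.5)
The plan is to pass to the complement and bound everything by counts normalized by $|\Sigma^n|$. Write
\[
\frac{|\Sigma^n\cap (L_1\cap L_2)|}{|\Sigma^n\cap L_2|}=1-\frac{|\Sigma^n\cap (L_2\setminus L_1)|}{|\Sigma^n\cap L_2|}.
\]
This reduces the claim to showing that the second fraction tends to $0$. By the previous lemma, this is exactly the statement $D(L_2\setminus L_1\mid L_2)=0$, which is equivalent to the desired $D(L_1\cap L_2\mid L_2)=1$, since $L_2\setminus (L_1\cap L_2)=L_2\setminus L_1$.

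\textbf{Numerator.} Since $L_2\setminus L_1\subseteq \sss\setminus L_1$, we have $|\Sigma^n\cap(L_2\setminus L_1)|\le |\Sigma^n\setminus L_1|$. Because $D(L_1)=1$ exists, property 2 of the preliminaries gives $D(\sss\setminus L_1)=0$, so
\[
\frac{|\Sigma^n\setminus L_1|}{|\Sigma^n|}\to 0 .
\]

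\textbf{Denominator.} Since $D(L_2)=a>0$, there is $N$ such that $|\Sigma^n\cap L_2|/|\Sigma^n|>a/2$ for all $n>N$. In particular $\Sigma^n\cap L_2\neq\emptyset$ for these $n$, so the relative density quotient is defined from $N$ on.

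\textbf{Combining.} For $n>N$,
\[
0\le \frac{|\Sigma^n\cap (L_2\setminus L_1)|}{|\Sigma^n\cap L_2|}\le \frac{|\Sigma^n\setminus L_1|/|\Sigma^n|}{|\Sigma^n\cap L_2|/|\Sigma^n|}\le \frac{2}{a}\cdot\frac{|\Sigma^n\setminus L_1|}{|\Sigma^n|}\longrightarrow 0.
\]
Squeezing gives the limit $0$, and hence $D(L_1\cap L_2\mid L_2)=1$.

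The only step needing care is the lower bound on the denominator: dividing by a density that is merely positive in the limit requires the explicit threshold $N$ obtained from $a>0$. That is where the hypothesis $D(L_2)>0$ is used, and the paper notes the conclusion fails without it. The rest is routine.
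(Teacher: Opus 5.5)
Your proof is correct and follows essentially the same route as the paper: both bound $|\Sigma^n\cap (L_2\setminus L_1)|$ by $|\Sigma^n\setminus L_1|$, and both use that $|\Sigma^n\cap L_2|$ is eventually a positive proportion of $|\Sigma^n|$. The paper does this with explicit $\varepsilon_1,\varepsilon_2$ bookkeeping, including an upper bound $a+\varepsilon_2$ on the denominator; your squeeze with the fixed lower bound $a/2$ is cleaner, and it also checks that the quotient is eventually defined.
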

\begin{proof}
Suppose that $D(L_1)=1$ and $D(L_2)=a>0$, that is, that
\begin{align}\label{l1}
 \forall \varepsilon >0 \; \exists N_\varepsilon \in \N: \; \forall n>N_\varepsilon, \; \frac{|\Sigma^n\cap L_1|}{|\Sigma^n|}>1-\varepsilon.
\end{align}
and
\begin{align} \label{l2}
 \forall \varepsilon >0 \; \exists N'_\varepsilon \in \N: \; \forall n>N'_\varepsilon, \; a-\varepsilon<\frac{|\Sigma^n\cap L_2|}{|\Sigma^n|}<a+\varepsilon.
\end{align}

We want to prove that 
$$ \forall \varepsilon >0 \; \exists M_\varepsilon \in \N: \; \forall n>M_\varepsilon, \; \frac{|\Sigma^n\cap L_1\cap L_2|}{|\Sigma^n\cap L_2|}>1-\varepsilon.$$

Let $0<\varepsilon<\frac 23$ and take $0<\varepsilon_1<\varepsilon a$ and $0<\varepsilon_2<\frac{\varepsilon a-\varepsilon_1}{2-\varepsilon}$.
Put $M_\varepsilon>\max\{N_{\varepsilon_1},N_{\varepsilon_2}'\}$ and let $n>M_\varepsilon$. Then,
\begin{align*}
|\Sigma^n\cap L_1\cap L_2|&=|\Sigma^n\cap L_2|-|\Sigma^n\cap L_1^c\cap L_2|\\
&\geq |\Sigma^n\cap L_2|- |\Sigma^n\cap L_1^c|\\
&> |\Sigma^n|(a-\varepsilon_2)-|\Sigma^n\cap L_1^c|\\
&>|\Sigma^n|(a-\varepsilon_1-\varepsilon_2).
\end{align*}
Hence, it follows that
\begin{align*}
\frac{|\Sigma^n\cap L_1\cap L_2|}{|\Sigma^n\cap L_2|}&>\frac{|\Sigma^n|(a-\varepsilon_1-\varepsilon_2)}{|\Sigma^n\cap L_2|}\\
&>\frac{a-\varepsilon_1-\varepsilon_2}{a+\varepsilon_2}\\
&=1-\frac{\varepsilon_1+2\varepsilon_2}{a+\varepsilon_2}\\
&=1-\frac{\varepsilon_1+\varepsilon \varepsilon_2+ (2-\varepsilon)\varepsilon_2}{a+\varepsilon_2}\\
&>1-\frac{\varepsilon_1+\varepsilon \varepsilon_2+ \varepsilon a-\varepsilon_1}{a+\varepsilon_2}\\
&=1-\varepsilon.
\end{align*}
\end{proof}
 
 \begin{remark}\label{intersection density zero}
 We might have $D(L_1)=1$ and $D(L_1\cap L_2\mid L_2)=0$. For example, if $\Sigma=\{a,b\}$, $L_1=\sss b\sss$, and  $L_2=a^*$, then $L_1\cap L_2=\emptyset$, so $D(L_1\cap L_2\mid L_2)=0$, but $D(L_1)=1$ by Theorem \ref{sinya}. However, the previous proposition shows that this cannot happen if $L_2$ has positive density.
 \end{remark}

\section{Subshifts of finite type}
 
 Let $\Sigma$ be a finite alphabet. We consider the set $\Sigma^\Z$ of bi-infinite words over $\Sigma$. A \emph{subshift} is a closed subset of $\Sigma^\Z$ invariant under the shift operation $\sigma$, which moves  the sequence one step to the left, that is, if $x=(x_i)_{i\in \Z}$, the shift is defined as
 $(\sigma(x))_i=x_{i+1}$. The \emph{language $\mc L(X)$ of a subshift $X$} is the set of all finite subwords of words in $X$. A word is \emph{forbidden} if it belongs to $\Sigma^*\setminus \mc L(X)$. A way to define a subshift is to fix a set of forbidden words and consider the bi-infinite words over $\Sigma$ that avoid them. A subshift is said to be of \emph{finite type} if it is defined by a finite set of forbidden words. A subshift $X$ is \emph{irreducible} if, for any pair of words $u,v\in \mc L(X)$, there exists $w\in \mc L(X)$ such that $uwv\in \mc L(X)$. The \emph{entropy} of a subshift $X$ is defined as $$h(X)=\lim_{n\to \infty}\frac{\log(|\Sigma^n\cap \mc L(X)|)}{n}.$$
 A subshift is said to be \emph{entropy-minimal} if, for every subshift $Y\subsetneq X$, we have $h(Y)<h(X)$. Irreducible subshifts of finite type are examples of entropy minimal subshifts.
 The entropy of an irreducible subshift of finite type $X$ is $\log(\lambda)$, where $\lambda$ is the leading (or Perron-Frobenius) eigenvalue of the adjacency matrix of the edge shift from which $X$ is conjugate. For more details on symbolic dynamics, we refer the reader to \cite{[LM95]}.
 
 We will now prove that, for two subshifts of finite type where one is contained in the other, the density of the language of the smallest one relative to that of the largest one is $0$. 
 
 \begin{theorem}\label{thm:shifts}
 	Let $X_1, X_2$ be  irreducible subshifts of finite type such that $X_1\subsetneq X_2$. Then $D(\mathcal L(X_1)\mid \mathcal L(X_2))=0$.
 \end{theorem}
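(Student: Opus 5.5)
The plan is to compare the growth rates of the two languages and to use entropy-minimality of irreducible subshifts of finite type. Since $X_1\subsetneq X_2$ and $X_2$ is an irreducible subshift of finite type, hence entropy-minimal, we get $h(X_1)<h(X_2)$. Write $h_i=h(X_i)$, $a_n=|\Sigma^n\cap\mathcal L(X_1)|$ and $b_n=|\Sigma^n\cap\mathcal L(X_2)|$. The goal is to show $a_n/b_n\to 0$. By Lemma \ref{dod} it is enough to show that the $\limsup$ is $0$, and since the terms are nonnegative, it is enough to bound $a_n/b_n$ above by a sequence tending to $0$.

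The key step is a pair of bounds: an upper bound on $a_n$ and a lower bound on $b_n$, valid for all $n$ and not only asymptotically.

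\emph{Upper bound on $a_n$.} The sequence $(a_n)$ is submultiplicative, since a word of length $m+n$ in $\mathcal L(X_1)$ splits into a prefix of length $m$ and a suffix of length $n$, both in $\mathcal L(X_1)$. Fekete's lemma then gives $h_1=\inf_n \frac{\log a_n}{n}$, so $a_n\geq e^{nh_1}$ for all $n$. That is the wrong direction for our purpose. The sharper tool is Perron--Frobenius. The subshift $X_1$ is conjugate to an irreducible edge shift with adjacency matrix $A_1$ and spectral radius $\lambda_1=e^{h_1}$. Words of length $n$ in $\mathcal L(X_1)$ correspond, up to a bounded window shift coming from the conjugacy (a sliding block code of finite memory and anticipation), to paths of length roughly $n$ in the graph of $A_1$. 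Hence $a_n\le C_1\lambda_1^n$ for some constant $C_1$. This uses that $\mathbf 1^T A_1^n\mathbf 1\le C\lambda_1^n$ for irreducible nonnegative matrices, which follows by comparing with a positive Perron eigenvector $v$: $A_1^n v=\lambda_1^n v$ and $v$ is bounded above and below by positive constants.

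\emph{Lower bound on $b_n$.} By the same comparison with a positive Perron eigenvector for $A_2$, we get $b_n\ge c_2\lambda_2^n$ with $c_2>0$, where $\lambda_2=e^{h_2}$. Alternatively, $b_n\ge e^{nh_2}$ follows directly from submultiplicativity and Fekete's lemma, which avoids the eigenvector argument for this half.

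Combining the two bounds gives $a_n/b_n\le (C_1/c_2)(\lambda_1/\lambda_2)^n$, which tends to $0$ because $\lambda_1<\lambda_2$. In fact the upper bound can also be made cheap without Perron--Frobenius: for any $\mu$ with $e^{h_1}<\mu<e^{h_2}$, the definition of entropy as a limit gives $a_n\le \mu^n$ for all large $n$, while $b_n\ge e^{nh_2}$. This yields $a_n/b_n\le (\mu e^{-h_2})^n\to 0$ directly. So the proof reduces to three facts: the strict entropy inequality, the Fekete lower bound for $b_n$, and the limit definition of $h_1$.

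The main obstacle is the strict inequality $h(X_1)<h(X_2)$. The paper asserts entropy-minimality of irreducible subshifts of finite type, and I will take that as the cited fact. If it needs justification, I would argue as follows. Since $X_1\ne X_2$, some word $w\in\mathcal L(X_2)$ is forbidden in $X_1$. Then $X_1$ lies in the subshift of $X_2$ avoiding $w$. Recoding $X_2$ as an irreducible edge shift, this corresponds to deleting the paths through $w$, which leaves a proper subgraph of a suitable higher block presentation. Every irreducible component of that subgraph has strictly smaller Perron eigenvalue than the full irreducible graph, by strict monotonicity of the Perron root under removing edges from an irreducible matrix.
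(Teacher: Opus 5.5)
Your argument is correct and is essentially the paper's proof. The paper also gets $h(X_1)<h(X_2)$ from entropy-minimality, then uses the limit definition of entropy to bound $|\Sigma^n\cap\mathcal L(X_1)|<\lambda_1^n e^{n\varepsilon}$ and $|\Sigma^n\cap\mathcal L(X_2)|>\lambda_2^n e^{-n\varepsilon}$ for large $n$, so the ratio is at most $(\lambda_1 e^{2\varepsilon}/\lambda_2)^n\to 0$. Your Fekete lower bound $b_n\ge e^{nh_2}$ (which removes the $e^{-n\varepsilon}$ slack) and the Perron--Frobenius detour are only cosmetic variations, and your sketch of the strict entropy drop correctly justifies the fact the paper simply cites.
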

 \begin{proof}
 	We have that there are $\lambda_1<\lambda_2$ such that  $\log(\lambda_1)=h(X_1)<h(X_2)=\log(\lambda_2)$. So,  $\lim_{n\to \infty}\frac{\log(|\Sigma^n\cap \mc L(X_1)|)}{n}=\log(\lambda_1)$, which means that, for all $\varepsilon >0$, there exists an integer $N_1$ such that, for all $n> N_1$, $\log(\lambda_1)-\varepsilon <\frac{\log(|\Sigma^n\cap \mc L(X_1)|)}{n}< \log(\lambda_1)+\varepsilon$, that is $\lambda_1^n e^{-n\varepsilon}<|\Sigma^n\cap \mc L(X_1)|<\lambda_1^n e^{n\varepsilon}$. Similarly, for all $\varepsilon >0$, there is some $N_2$ such that, for all $n>N_2$, 
 	$\lambda_2^n e^{-n\varepsilon}<|\Sigma^n\cap \mc L(X_2)|<\lambda_2^n e^{n\varepsilon}$. So, we get that, for all $\varepsilon>0$, taking $N=\max\{N_1,N_2\}$ and $n>N$, 
 	$$0\leq \frac{|\Sigma^n\cap \mc L(X_1)|}{|\Sigma^n\cap \mc L(X_2)|}<\frac{\lambda_1^n e^{n\varepsilon}}{\lambda_2^n e^{-n\varepsilon}}=\left(\frac{\lambda_1e^{2\varepsilon}}{\lambda_2}\right)^n.$$
 	In particular, for $\varepsilon <\frac{\log(\lambda_2)-\log(\lambda_1)}{2}$, we get that, $\frac{\lambda_1e^{2\varepsilon}}{\lambda_2}<1$, so $\left(\frac{\lambda_1e^{2\varepsilon}}{\lambda_2}\right)^n\to 0$. Hence, $D(\mathcal L(X_1)\mid \mathcal L(X_2))=\lim_{n\to \infty}\frac{|\Sigma^n\cap \mc L(X_1)|}{|\Sigma^n\cap \mc L(X_2)|}=0$
 \end{proof}
 
 \section{Free groups}
 We will now focus on subsets of free groups. We will prove a version of the infinite monkey theorem for reduced words using the previous result on subshifts and prove the converse for rational languages of reduced words. We use this result to show that the automorphic orbit of any element has natural density $0$, generalizing Burillo-Ventura's result on primitive elements \cite{[BV02]} and  describe rational subsets of positive density in a nonabelian free group. We then focus on the subgroup case where we describe those of positive density and those for which there is convergence.
 
  First, we show an analogue of the infinite monkey theorem for reduced words using Theorem \ref{thm:shifts}. In words, we show that a monkey that only writes reduced words will eventually write any reduced word with probability 1. We remark that the language of reduced words has density $0$ by the infinite monkey theorem, as no reduced word contains a subword of the form $aa\inv$, so the result does not follow immediately from Proposition \ref{intersect} (see Remark \ref{intersection density zero}).

 We denote by $\Red(X)$ the language of freely reduced words over $\Sigma=X\cup X^{-1}$. It is easy to see that $|\Sigma^n\cap \Red(X)|=2|X|(2|X|-1)^{n-1}$ as there are $2|X|$ options for the first letter of a reduced word and in successive letters, there is only one letter that makes the word not reduced, so we always have $2|X|-1$ options.

 \begin{corollary}\label{infinitemonkey reduced}
 	Let $k\geq 2$, $X=\{x_1,\ldots, x_k\}$ and $L\subseteq \Red(X)$.  If $\sss s\sss\cap L=\emptyset$ for some $s\in \Red(X)$, then $D(L\mid \Red(X))=0$.
 \end{corollary}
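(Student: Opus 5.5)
The plan is to apply Theorem \ref{thm:shifts} with $X_2$ the subshift of reduced bi-infinite words and $X_1$ the subshift of reduced bi-infinite words that also avoid $s$. Concretely, let $X_2\subseteq \Sigma^\Z$ be the subshift defined by the finite forbidden set $\{aa\inv \mid a\in\Sigma\}$, and let $X_1$ be defined by the finite forbidden set $\{aa\inv\mid a\in \Sigma\}\cup\{s\}$. Both are subshifts of finite type and $X_1\subseteq X_2$. The conclusion will then follow by comparison: every word of $L$ avoids $s$, so $L\cap \Sigma^n\subseteq \mc L(X_1)\cap\Sigma^n$ up to a correction discussed below. Also $\mc L(X_2)=\Red(X)$, since every reduced word extends to a bi-infinite reduced word (there are at least $2k-1\geq 3$ admissible letters on each side). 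Hence
$$\frac{|\Sigma^n\cap L|}{|\Sigma^n\cap \Red(X)|}\le \frac{|\Sigma^n\cap \mc L(X_1)|}{|\Sigma^n\cap\mc L(X_2)|}\to 0.$$

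The steps, in order, are as follows. First, $X_2$ is irreducible: for reduced $u,v$, choose a letter $c\neq$ the inverse of the last letter of $u$ and $c\neq$ the inverse of the first letter of $v$ (possible since $2k\ge 4$); then $ucv$ is reduced. Second, $X_1\subsetneq X_2$: a bi-infinite reduced word containing $s$, for instance obtained by extending $s$ on both sides, lies in $X_2\setminus X_1$.

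The main obstacle is that $X_1$ need not be irreducible, and the inclusion $L\subseteq\mc L(X_1)$ may fail, since a finite word avoiding $s$ need not extend bi-infinitely while avoiding $s$. To avoid both issues I would instead bound directly the set $A=\{w\in \Red(X)\mid s \text{ is not a subword of } w\}\supseteq L$ and show $|A\cap\Sigma^n|$ grows at exponential rate strictly less than $2k-1$. There are two ways to do this. One is to replace $X_1$ by an irreducible component of maximal entropy of the SFT and use that the number of finite words avoiding the forbidden set is controlled by the spectral radius of the corresponding de Bruijn-type matrix. The other, which is more elementary and which I would try first, is a block argument. Let $m=|s|$. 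For any reduced $w$ of length $n$, split it into consecutive blocks of length $m+2$. Each block, given the previous letter, can be completed as $c\,s\,c'$ with suitable connecting letters $c,c'$; among its $(2k-1)^{m+2}$ possible reduced continuations at least one contains $s$. Thus $|A\cap \Sigma^n|\le 2k\,(2k-1)^{r}\big((2k-1)^{m+2}-1\big)^{\lfloor n/(m+2)\rfloor}$, where $r<m+2$ is the leftover length, and dividing by $2k(2k-1)^{n-1}$ gives a ratio of order $(1-(2k-1)^{-(m+2)})^{\lfloor n/(m+2)\rfloor}\to 0$.

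This block estimate in fact proves the corollary on its own. In keeping with the paper, though, I would phrase it as establishing $h(X_1)<h(X_2)$ and then invoke the counting argument of Theorem \ref{thm:shifts}, which only uses the entropy gap. The conclusion $D(L\mid\Red(X))=0$ then follows by monotonicity, since $L\subseteq A$.
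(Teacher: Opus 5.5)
Your argument is correct, but it takes a different route from the paper.

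\textbf{The paper's route.} The paper stays inside symbolic dynamics. It checks that $X_1$ is irreducible, connecting $u$ to $v$ through $z^m y^m$ with $z\notin\{s_{\text{first}},s_{\text{last}}\}$ and $y\neq s_{\text{first}}$. It then proves that every reduced word avoiding $s$ extends to a bi-infinite reduced word avoiding $s$, so that $L\subseteq\mathcal L(X_1)$. Finally it gets the entropy gap from entropy-minimality of the irreducible SFT $X_2$ via Theorem~\ref{thm:shifts}.

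\textbf{Your two anticipated obstacles.} Neither actually arises here. $X_1$ is irreducible, and $L\subseteq\mathcal L(X_1)$ holds. The simplest proof of the inclusion is to extend one letter at a time, first infinitely to the right and then to the left. At each step at most one letter would create an occurrence of $s$ and at most one would create a cancellation, which leaves at least $2k-2\geq 2$ choices.

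\textbf{What your block count does instead.} It makes both points irrelevant, because it bounds the set $A$ of all reduced words avoiding $s$ directly. This gives the explicit estimate $|\Sigma^n\cap L|/|\Sigma^n\cap\Red(X)|\leq (2k-1)\bigl(1-(2k-1)^{-(m+2)}\bigr)^{\lfloor n/(m+2)\rfloor}$. It is the classical proof of the monkey theorem run on the one-step Markov constraint of reducedness, and it needs no Perron--Frobenius theory or entropy-minimality.

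For the same reason, recasting it as ``$h(X_1)<h(X_2)$ plus Theorem~\ref{thm:shifts}'' would be a detour. That framing reintroduces the need for $L\subseteq\mathcal L(X_1)$. Apply the count to $A$, as your last sentence does.

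\textbf{Comparison.} The paper's route buys a conceptual framing: the corollary becomes an instance of a statement about nested irreducible SFTs. Yours buys self-containedness and an explicit rate. It also sidesteps the extension step, which is the delicate part of the paper's argument. As written, that argument does not exclude an occurrence of $s$ straddling $a\,u\,b$. For distinct $c,d\in X$, the choices $s=cdc^{-1}$, $u=d$, $a=c$, $b=c^{-1}$ satisfy all its stated constraints, yet $aub=s$.
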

 \begin{proof}
 	Let the set $X_2$ of all bi-infinite reduced words over $\Sigma$, which is clearly an irreducible subshift of finite type, and $X_1$ be the set of all bi-infinite reduced words over $\Sigma$ that avoid $s$. Then $X_1$ is also a subshift of finite type. We will now see that it is irreducible. To prove $X_1$ is irreducible, let $u, v \in \mathcal{L}(X_1)$,  $m=|s|$ and let $s_{\text{first}}$ and $s_{\text{last}}$ denote the first and last letters of $s$, respectively. Since $|\Sigma| \geq 4$, we can choose $z$ such that $uz$ is reduced and $z \notin \{s_{\text{first}}, s_{\text{last}}\}$. Similarly, we choose $y$ such that $zyv$ is reduced and $y \neq s_{\text{first}}$. The word $u z^m y^m v$ is reduced and avoids $s$. Indeed, an occurrence of $s$ cannot start inside $u$ as it cannot end inside $u$, since $s$ is not a subword of $u$ and cannot end in $z$ as $z\neq s_{\textbf{last}}$; it cannot start in $z$ or $y$ as $z\neq s_\text{first}$ and  $y\neq s_\text{first}$; it cannot start inside $v$ as $s$ is not a subword of $v$. Hence, by  Theorem \ref{thm:shifts}, $D(\mathcal L(X_1)\mid \mathcal L(X_2))=0$.
 	
 	But $\mc L(X_2)=\Red(X)$ as, clearly, every reduced word can be extended to a bi-infinite reduced word. We will now show that $L\subseteq \mc L(X_1)$, which yields that $$D(L\mid \Red(X))=D( L\mid \mathcal L(X_2))\leq D(\mathcal L(X_1)\mid \mathcal L(X_2))=0.$$
 	
 	To do so, it suffices to see that any reduced word avoiding $s$ can be extended to a word in $X_1$, that is, a bi-infinite reduced word avoiding $s$.
 	Let $u$ be such a word, $u_1$ be the longest  prefix of $u$ which is a suffix of $s$, $u_3$ be the longest suffix of $u$ which is a prefix of $s$ and $u_2$ be such that $u=u_1u_2u_3$. Since $X\geq 2$, there are always letters $a,b\in \Sigma$ such that $au_1$ and $u_3b$ are reduced, $au_1$ is not a suffix of $s$ and $u_3b$ is not a prefix of $s$. Also, there are letters $x,y\in\Sigma$ different from the last and first letters of s, respectively, such that $xau_1u_2u_3by$ is reduced. We claim that the bi-infinite word $\cdots xxxau_1u_2u_3byyy\cdots$ is reduced and does not contain $s$, that is belongs to $X_1$. Reduction comes from the choice of $x,a,b$ and $y$. The factor $u=u_1u_2u_3$ does not contain $s$ as $u\in L$. Since $y$ is not the first letter of $s$, then a potential occurrence of $s$ would have to start before $b$. Since $u_3b$  is not a prefix of $s$ and $u_3$ is the longest prefix of $s$ that is a suffix of $u$, then a potential occurrence of $s$ start before $u$. But, by a similar reasoning, since $x$ is not the last letter of $s$, such an occurrence would have to end inside $u$, which cannot happen since $u_1$ is the longest prefix of $u$ that is a suffix of $s$ and $au_1$ is not a suffix of $s$.
 \end{proof}

 We can define $D_{\sup}^*(L\mid \Red(X))$ and $D_{\inf}^*(L\mid \Red(X))$ in the obvious way.
 We remark that although $D(L \mid \Red(X))=D^*(L\mid \Red(X))$, it might be the case that 
 $D_{\sup}^*(L\mid \Red(X))$ (resp. $D_{\inf}^*(L\mid \Red(X))$) does not coincide with $D_{\sup}(L\mid \Red(X))$ (resp. $D_{\inf}(L\mid \Red(X))$). However, positivity cannot change, as shown by the next proposition.
 
 \begin{proposition}\label{null cumulative}
 	Let $L\subseteq \Red(X)$ be a language of reduced words. Then $D_{\sup}^*(L\mid \Red(X))=0$ if and only if $D_{\sup}(L\mid \Red(X))=0$.
 \end{proposition}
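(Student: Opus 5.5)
Write $a_n=|\Sigma^n\cap L|$ and $r_n=|\Sigma^n\cap\Red(X)|$, so $r_0=1$ and $r_n=2k(2k-1)^{n-1}$ for $n\geq 1$, where $k=|X|$. Set $q=2k-1$. The plan is to prove both implications directly, using only the exact geometric growth of $r_n$ and the trivial bound $a_n\leq r_n$. We may assume $k\geq 1$, so $q\geq 1$; the nonabelian case $q\geq 3$ is the one of interest.

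For the direction $D_{\sup}(L\mid\Red(X))=0\Rightarrow D_{\sup}^*(L\mid\Red(X))=0$, note that by Lemma \ref{dod} the hypothesis gives $a_n/r_n\to 0$. When $q>1$, the partial sums $R_n=\sum_{i\le n}r_i$ increase to infinity. Stolz--Ces\`aro then gives $\sum_{i\le n}a_i/R_n\to 0$. Concretely, split the sum at a threshold $N$ beyond which $a_i\le\varepsilon r_i$; the head is a fixed constant divided by $R_n\to\infty$, and the tail is at most $\varepsilon$. This direction is routine.

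For the converse, assume $\sum_{i\le n}a_i/R_n\to 0$. The key observation is that the last sphere carries a fixed positive proportion of the ball: for $n\geq 1$,
\[
R_n = 1+2k\frac{q^n-1}{q-1}\le C\, r_n
\]
for a constant $C$ depending only on $k$ (for instance, $C$ of order $q/(q-1)$ plus a small correction). Hence
\[
\frac{a_n}{r_n}\le \frac{\sum_{i\le n}a_i}{r_n}\le C\,\frac{\sum_{i\le n}a_i}{R_n}\longrightarrow 0,
\]
so $a_n/r_n\to 0$, which means $D_{\sup}(L\mid\Red(X))=0$. The only point needing care is the uniform bound $R_n/r_n\le C$. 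It follows from the explicit formula, since $R_n/r_n\to q/(q-1)$, which is finite because $q\geq 3>1$.

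I expect no real obstacle. The main thing to watch is that the argument uses $q>1$ in both directions, which holds when $|X|\geq 2$ (and even for $|X|=1$ via direct inspection). It is also worth remarking why the analogous statement for nonzero limsup values can fail: the inequalities above transfer only the value $0$ faithfully, not the actual value of the limsup. This matches the paper's comment that $D_{\sup}$ and $D_{\sup}^*$ may differ while positivity is preserved.
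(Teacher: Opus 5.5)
Your proof is correct and is essentially the paper's argument. The forward direction splits the cumulative sum at a threshold and uses $R_n\to\infty$. The converse uses that the last sphere is a fixed proportion of the ball, $r_n/R_n>\frac{2|X|-2}{2|X|-1}\geq\frac23$; the paper phrases this as a contradiction, whereas you use the equivalent direct bound $R_n\le Cr_n$.

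One correction to your side remark: for $|X|=1$ the converse is false, not true by direct inspection. For example, $L=\{a^{2^j}\mid j\geq 0\}$ has $D_{\sup}^*(L\mid\Red(X))=0$ (because $R_n=2n+1$) but $D_{\sup}(L\mid\Red(X))=\frac12$ (because $r_n=2$ for $n\geq 1$). So the hypothesis $|X|\geq 2$ is genuinely needed in that direction.
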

 \begin{proof}
 	Suppose that $D_{\sup}(L\mid \Red(X))=0$, that is, that $\limsup_{n\to \infty} \frac{|\Sigma^n\cap L|}{|\Sigma^n\cap \Red(X)|}=0.$
 	So, we have that 
 	$$\forall \varepsilon >0 \;\exists N\in \N \; : \forall n>N, \; \frac{|\Sigma^n\cap L|}{|\Sigma^n\cap \Red(X)|}<\varepsilon.$$
 	Now, let $\varepsilon >0$ and $M$ be such that $ \frac{|\Sigma^n\cap L|}{|\Sigma^n\cap \Red(X)|}<\frac{\varepsilon}{2},$ for all $n>M$. Also, let $M'$ be such that $ \frac{\sum_{i=0}^M|\Sigma^i\cap \Red(X)|}{\sum_{i=0}^n |\Sigma^i\cap \Red(X)|} <\frac \varepsilon 2$ and put $N=\max\{M,M'\}$ and let $n>N$. We will prove that $$\frac{\sum_{i=0}^n|\Sigma^i\cap L|}{\sum_{i=0}^n |\Sigma^i\cap \Red(X)|}<\varepsilon,$$
 	thus showing that 
 		$$\forall \varepsilon >0 \;\exists N\in \N \; : \forall n>N, \; \frac{\sum_{i=0}^n|\Sigma^i\cap L|}{\sum_{i=0}^n |\Sigma^i\cap \Red(X)|}<\varepsilon,$$
 		that is, that $D_{\sup}^*(L\mid \Red(X))=\limsup_{n\to \infty} \frac{\sum_{i=0}^n|\Sigma^i\cap L|}{\sum_{i=0}^n |\Sigma^i\cap \Red(X)|}=0$.
 We have that 
 \begin{align*}
 	\frac{\sum_{i=0}^n|\Sigma^i\cap L|}{\sum_{i=0}^n |\Sigma^i\cap \Red(X)|} &=	\frac{\sum_{i=0}^M|\Sigma^i\cap L|+\sum_{i=M+1}^n|\Sigma^i\cap L|}{\sum_{i=0}^n |\Sigma^i\cap \Red(X)|}\\
 	&\leq \frac{\sum_{i=0}^M|\Sigma^i\cap \Red(X)|+\sum_{i=M+1}^n|\Sigma^i\cap L|}{\sum_{i=0}^n |\Sigma^i\cap \Red(X)|}\\
 	&\leq \frac{\sum_{i=0}^M|\Sigma^i\cap \Red(X)|+ \sum_{i=M+1}^n \frac \varepsilon 2|\Sigma^i\cap \Red(X)|}{\sum_{i=0}^n |\Sigma^i\cap \Red(X)|}\\
 	&\leq  \frac{\sum_{i=0}^M|\Sigma^i\cap \Red(X)|}{\sum_{i=0}^n |\Sigma^i\cap \Red(X)|} +\frac \varepsilon 2\frac{\sum_{i=M+1}^n |\Sigma^i\cap \Red(X)|}{\sum_{i=0}^n |\Sigma^i\cap \Red(X)|}\\
 	&\leq \frac \varepsilon 2 + \frac \varepsilon 2\frac{\sum_{i=0}^n |\Sigma^i\cap \Red(X)|}{\sum_{i=0}^n |\Sigma^i\cap \Red(X)|}\\
 	&= \varepsilon.
 \end{align*}
 	 	To prove the converse, suppose that 
 	 		$$\forall \varepsilon >0 \;\exists N\in \N \; : \forall n>N, \; \frac{\sum_{i=0}^n|\Sigma^i\cap L|}{\sum_{i=0}^n |\Sigma^i\cap \Red(X)|}<\varepsilon.$$
 	Let $\varepsilon>0$ and $N\in \N$ be such that  $\frac{\sum_{i=0}^n|\Sigma^i\cap L|}{\sum_{i=0}^n |\Sigma^i\cap \Red(X)|}<\frac{2}{3}\varepsilon$ for all $n>N$. If there was $n>N$ such that $\frac{|\Sigma^n\cap L|}{|\Sigma^n\cap \Red(X)|}\geq\varepsilon$, then
 	\begin{align*}
 		\frac{\sum_{i=0}^n|\Sigma^i\cap L|}{\sum_{i=0}^n |\Sigma^i\cap \Red(X)|}&\geq 	\frac{|\Sigma^n\cap L|}{\sum_{i=0}^n |\Sigma^i\cap \Red(X)|}\\
 		&\geq \frac{\varepsilon |\Sigma^n\cap \Red(X)|}{\sum_{i=0}^n |\Sigma^i\cap \Red(X)|}\\
 		&= \varepsilon \frac{2|X|(2|X|-1)^{n-1}}{{\frac{|X|(2|X|-1)^{n}-1}{|X|-1}}}\\
 		&>\varepsilon  \frac{2|X|(2|X|-1)^{n-1}}{{\frac{|X|(2|X|-1)^{n}}{|X|-1}}}\\
 			&=\varepsilon \frac{2|X|-2}{2|X|-1}\\
 			&\geq \frac2 3 \varepsilon,
 	\end{align*}
 	which contradicts the choice of $N$.
 \end{proof}
 
  In \cite{[BV02]}, the authors define the \emph{natural density} of a subset $S\subseteq G$ of a finitely generated group $G$ with finite generating set $X$ as $$\delta_X(S)=\limsup_{n\to \infty}\frac{|S\cap B_X(n)|}{|B_X(n)|},$$ where $B_X(n)$ represents the ball of radius $n$ (centered at the identity) with respect to $X$. They proceed to prove that the set of primitive words of a free group has density 0.

We now prove useful lemma for us to reach the goal of this section, that is, proving that the automorphic orbit of an element of a nonabelian free group has density $0$.

\begin{lemma}\label{lem: cycredcore}
	Let $X=\{x_1,\ldots, x_k\}$ be a basis for a nonabelian free group $F_k$, $S\subseteq F_k$ be a subset of $F_k$ such that $\delta(S)=0$ and let 
	$$S'=\{u\inv s u\mid u\in F_k,\, s\in S,\, \text{ and $u\inv s u$ is reduced}\}$$ 
	be the set of all words whose cyclically reduced core lies in $S$. Then $\delta(S')=0$.
\end{lemma}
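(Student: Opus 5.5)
The plan is to work with spheres rather than balls, since Proposition \ref{null cumulative} turns a statement about spheres into one about balls. Identify each element of $F_k$ with its unique reduced word, so that $B_X(n)$ corresponds to $\bigcup_{i\le n}\Sigma^i\cap \Red(X)$. Then $\delta(S)=D^*_{\sup}(\overline S\mid \Red(X))$, and likewise for $S'$.

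First, by Proposition \ref{null cumulative} and Lemma \ref{dod}, the hypothesis $\delta(S)=0$ gives
$$a_m:=\frac{|\Sigma^m\cap \overline S|}{|\Sigma^m\cap\Red(X)|}\longrightarrow 0 .$$
Write $r=2k-1\ge 3$, so that $|\Sigma^m\cap \Red(X)|=2k\,r^{m-1}$ for $m\ge1$. It suffices to prove that $b_n:=|\Sigma^n\cap \overline{S'}|/|\Sigma^n\cap \Red(X)|\to 0$. Applying Proposition \ref{null cumulative} in the other direction then yields $\delta(S')=0$.

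Second, we count. Every reduced word of length $n$ in $\overline{S'}$ has the form $u\inv s u$ with the concatenation reduced, $s\in\overline S$, $|u|=j$ and $|s|=n-2j$ for some $0\le j\le \lfloor n/2\rfloor$. Such a word is determined by the pair $(s,u)$, and the number of reduced $u$ of length $j$ is at most $2k\,r^{j-1}\le 2k\, r^{j}$. Hence
$$|\Sigma^n\cap\overline{S'}|\le \sum_{j=0}^{\lfloor n/2\rfloor} |\Sigma^{n-2j}\cap\overline S|\cdot 2k\,r^{j}\le C\sum_{j=0}^{\lfloor n/2\rfloor} a_{n-2j}\, r^{n-2j}\, r^{j}$$
for a constant $C$ depending only on $k$; the term $m=0$, i.e.\ the identity, only changes the constant. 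Dividing by $2k\,r^{n-1}$ gives
$$b_n\le C'\sum_{j=0}^{\lfloor n/2\rfloor} a_{n-2j}\, r^{-j}.$$

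Third, we split this sum, which is the only point that requires care. Given $\varepsilon>0$, choose $J$ with $C'\sum_{j>J} r^{-j}<\varepsilon/2$, using $a_m\le 1$ for the tail $j>J$. For the finitely many indices $j\le J$, choose $N$ such that $a_m<\varepsilon(1-r^{-1})/(2C')$ for all $m>N$. Then for $n>N+2J$ the head is bounded by $\varepsilon/2$, so $b_n<\varepsilon$. The main obstacle is exactly this uniformity in $j$. It is resolved by the geometric decay $r^{-j}$: the $u$-part contributes only $r^{j}$ words, while the length budget costs $r^{2j}$. This is where nonabelian-ness ($r\ge 3$, in fact $r>1$ suffices) is used. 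Therefore $b_n\to 0$, and $\delta(S')=0$ follows as explained in the first step.
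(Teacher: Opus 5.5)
Your proposal is correct and follows essentially the same route as the paper: pass to spheres via Proposition \ref{null cumulative}, bound $|\Sigma^n\cap\overline{S'}|$ by a sum over the core length weighted by $(2k-1)^{|u|}$, and show that this convolution of a null sequence with a geometrically decaying kernel tends to $0$. The differences are cosmetic: you treat all $n$ at once instead of splitting into odd and even lengths, you split the sum by $|u|$ rather than by $|s|$, and you work with an inequality throughout, which is all that is needed (when $s$ is cyclically reduced and $|u|\geq 1$ only $2k-2$ first letters of $u$ are admissible, so the paper's displayed count is really an upper bound).
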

\begin{proof} 	Let $\overline{S}$ and $\overline{S'}$  be the languages of reduced words representing elements in $S$ and $S'$, respectively.
	Notice that, since the length of a reduced word has the same parity as the length of its cyclically reduced core and that, setting the word on the left of the cyclically reduced core forces the choice of letters appearing to the right of the cyclically reduced core, we have,
	for $n\in \N$, 
	\begin{align*}
		|\Sigma^{2n+1}\cap \overline{S'}|=\sum_{i=0}^n |\Sigma^{2i+1}\cap \overline{S}|(2k-1)^{n-i} \quad \text{ and } \quad |\Sigma^{2n}\cap \overline{S'}|=\sum_{i=1}^n |\Sigma^{2i}\cap \overline{S}|(2k-1)^{n-i}. 
	\end{align*}

	To show that $\delta(S')=D_{\sup}^*(\overline{S'}\mid\Red(X))=0$, we will show that $D_{\sup}(\overline{S'}\mid\Red(X))=0$, which suffices by Proposition \ref{null cumulative}. We will see that both the even and odd subsequences of $\frac{|\Sigma^n\cap \overline{S'}|}{|\Sigma^n\cap \Red(X)|}$ converge to $0$. We will start with the odd case, that is, showing that $$	\limsup_{n\to +\infty}\frac{|\Sigma^{2n+1}\cap \overline{S'}|}{|\Sigma^{2n+1}\cap \Red(X)|}=0.$$
	Let $\varepsilon>0$. Since $\delta(S)=0$, then $\oD^*(\overline{S}\mid \Red(X))=0$ and so $\oD(\overline{S}\mid \Red(X))=0$ by Proposition \ref{null cumulative}. Hence, there is some $M\in \N$ such that, for all $n>M$, $\frac{|\Sigma^{2n+1}\cap \overline{S}|}{|\Sigma^{2n+1}\cap \Red(X)|}<\varepsilon \frac{2k-2}{2(2k-1)}.$ Now, for all $n>M$ and $n$ large enough so that $\sum_{i=0}^M\frac{ |\Sigma^{2i+1}\cap \overline{S}|}{2k(2k-1)^{n+i}}<\frac \varepsilon 2$ , we have that 
	\begin{align*}
\frac{|\Sigma^{2n+1}\cap \overline{S'}|}{|\Sigma^{2n+1}\cap \Red(X)|}
=&\frac{\sum_{i=0}^n |\Sigma^{2i+1}\cap \overline{S}|(2k-1)^{n-i}}{2k(2k-1)^{2n}}\\
=&\frac{\sum_{i=0}^M |\Sigma^{2i+1}\cap \overline{S}|(2k-1)^{n-i}+ \sum_{i=M+1}^n |\Sigma^{2i+1}\cap \overline{S}|(2k-1)^{n-i}}{2k(2k-1)^{2n}}\\
=&\sum_{i=0}^M\frac{ |\Sigma^{2i+1}\cap \overline{S}|}{2k(2k-1)^{n+i}}+ \sum_{i=M+1}^n\frac{ |\Sigma^{2i+1}\cap \overline{S}|}{2k(2k-1)^{2i}}\frac{1}{(2k-1)^{n-i}}\\
<&\frac \varepsilon 2+ \varepsilon \frac{2k-2}{2(2k-1)}\sum_{i=M+1}^n\frac{1}{(2k-1)^{n-i}}\\
=&\frac \varepsilon 2+ \varepsilon \frac{2k-2}{2(2k-1)}\sum_{i=0}^{n-M-1}\frac{1}{(2k-1)^{i}}\\
<&\frac \varepsilon 2+ \varepsilon \frac{2k-2}{2(2k-1)}\sum_{i=0}^\infty\frac{1}{(2k-1)^{i}}\\
=&\varepsilon
	\end{align*}
	
	Similarly, for the even case, we aim to show that 
	$$ \limsup_{n\to +\infty}\frac{|\Sigma^{2n}\cap \overline{S'}|}{|\Sigma^{2n}\cap \Red(X)|}=0. $$
	Given $\varepsilon>0$, we use the fact that $\oD(\overline{S}\mid \Red(X))=0$ to find $M' \in \N$ such that for all $n > M'$, we have $\frac{|\Sigma^{2n}\cap \overline{S}|}{|\Sigma^{2n}\cap \Red(X)|} < \varepsilon \frac{2k-2}{2(2k-1)}$. For $n$ sufficiently large such that $\sum_{i=1}^{M'}\frac{|\Sigma^{2i}\cap \overline{S}|}{2k(2k-1)^{n+i-1}}<\frac{\varepsilon}{2}$, we have:
	\begin{align*}
		\frac{|\Sigma^{2n}\cap \overline{S'}|}{|\Sigma^{2n}\cap \Red(X)|}
		&=\frac{\sum_{i=1}^n |\Sigma^{2i}\cap \overline{S}|(2k-1)^{n-i}}{2k(2k-1)^{2n-1}}\\
		&=\sum_{i=1}^{M'}\frac{|\Sigma^{2i}\cap \overline{S}|}{2k(2k-1)^{n+i-1}} + \sum_{i=M'+1}^n \frac{|\Sigma^{2i}\cap \overline{S}|}{2k(2k-1)^{2i-1}}\frac{1}{(2k-1)^{n-i}}\\
		&< \frac{\varepsilon}{2} + \varepsilon \frac{2k-2}{2(2k-1)} \sum_{j=0}^{n-M'-1} \frac{1}{(2k-1)^{j}}\\
		&< \frac{\varepsilon}{2} + \varepsilon \frac{2k-2}{2(2k-1)} \frac{2k-1}{2k-2} = \varepsilon.
	\end{align*}
	Since both the odd and even subsequences converge to $0$, it follows that $D_{\sup}(\overline{S'}\mid\Red(X))=0$. By Proposition \ref{null cumulative}, we conclude $\delta(S')=0$.
	\end{proof}

	A word is said to be \emph{primitivity-blocking} if it is not a subword of any cyclically reduced primitive word. It is a consequence of Whitehead's work \cite{[Whi36]} that nonabelian free groups have primitivity-blocking words (see also \cite{[KOOS26]}).  In fact, in \cite{[KOOS26]}, the authors show that for any element $w\in F_k$, there is some  element $g\in F_k$ such that no cyclically reduced image of $w$ under an automorphism of $F_k$ contains $g$ as a subword.

	Hence, Corollary \ref{infinitemonkey reduced} together with Proposition \ref{null cumulative} and the previous lemma yield a language-theoretic proof of
	that automorphic orbits of elements in nonabelian free groups have density zero, which generalizes Burillo-Ventura's result on primitive words.

 \begin{corollary}\label{cor: shpilrain}
 	Let $g$ be an element of a nonabelian free group $F_k$ and $S'$ be the set of   images of $g$ under an automorphism of $F_k$.  Then $\delta(S')=0$.
 \end{corollary}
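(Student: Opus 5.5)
The plan is to combine the orbit-blocking words of \cite{[KOOS26]} with Corollary \ref{infinitemonkey reduced}, and then pass from cyclically reduced words to all words via Lemma \ref{lem: cycredcore}. Let $S'$ be the automorphic orbit of $g$. Let $S\subseteq S'$ be the set of elements of $S'$ that are cyclically reduced, viewed through their reduced representatives. Conjugation is an (inner) automorphism, so $S'$ is closed under conjugation. Hence every element of $S'$ has the form $u\inv s u$ (reduced as written) with $s$ its cyclically reduced core, and $s\in S$. Therefore $S'$ is contained in the set $\{u\inv s u\mid u\in F_k,\, s\in S,\, u\inv s u \text{ reduced}\}$ considered in Lemma \ref{lem: cycredcore}; in fact the two sets are equal. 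So it suffices to show $\delta(S)=0$, and then Lemma \ref{lem: cycredcore} gives $\delta(S')=0$.

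To show $\delta(S)=0$, I would invoke \cite{[KOOS26]}: there is an element $w\in F_k$, represented by a reduced word $w\in \Red(X)$, such that no cyclically reduced image of $g$ under an automorphism contains $w$ as a subword. Then $\overline{S}\subseteq \Red(X)$ satisfies $\sss w\sss\cap \overline S=\emptyset$. Since $F_k$ is nonabelian, $k\geq 2$, and Corollary \ref{infinitemonkey reduced} yields $D(\overline S\mid \Red(X))=0$. In particular $\oD(\overline S\mid \Red(X))=0$.

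Next I convert this to natural density. The ball $B(n)$ corresponds bijectively to $\bigcup_{i\le n}(\Sigma^i\cap\Red(X))$, and $S\cap B(n)$ corresponds to $\bigcup_{i\le n}(\Sigma^i\cap\overline S)$. Hence $\delta(S)=\oDc(\overline S\mid \Red(X))$. By Proposition \ref{null cumulative}, this vanishes because $\oD(\overline S\mid\Red(X))=0$. Lemma \ref{lem: cycredcore} then gives $\delta(S')=0$.

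The main point needing care is the identification of $S'$ with the set of conjugates $u\inv s u$ of cyclically reduced elements $s$ of the orbit. One must check that taking the cyclically reduced core stays inside the orbit, which holds because inner automorphisms compose with any automorphism to give an automorphism. One must also check that the blocking-word statement of \cite{[KOOS26]} applies exactly to these cyclically reduced cores. Everything else is a direct chaining of Corollary \ref{infinitemonkey reduced}, Proposition \ref{null cumulative} and Lemma \ref{lem: cycredcore}.
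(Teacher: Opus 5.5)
Your proposal is correct and follows essentially the same route as the paper. You use the orbit-blocking word from \cite{[KOOS26]} with Corollary \ref{infinitemonkey reduced} and Proposition \ref{null cumulative} to get $\delta(S)=0$ for the cyclically reduced part $S$ of the orbit, identify $S'$ with the set of reduced conjugates $u\inv s u$ ($s\in S$) using closure under inner automorphisms, and conclude with Lemma \ref{lem: cycredcore}.
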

 \begin{proof}
 Let, $S$ be the set of cyclically reduced automorphic images of $g$, $L$ be the language of reduced words representing  elements in $S$ and $s$  be an orbit-blocking word for $g$. We have that $\sss s \sss \cap L=\emptyset$, so $\oD(L\mid \Red(X))=0$ by Corollary \ref{infinitemonkey reduced}, which means that $D_{\sup}^*(L\mid \Red(X))=0$ by Proposition \ref{null cumulative}  and  $\delta(S')=D_{\sup}^*(L\mid \Red(X))=0$. Now the result follows from Lemma \ref{lem: cycredcore} noting that $$S'=\{u\inv s u\mid u\in F_k,\, s\in S,\, \text{ and $u\inv s u$ is reduced}\}$$. Indeed, any reduced word of the form $uwu^{-1}$ with $w\in S$ lies in $S'$ as, by definition of $S$, there is an automorphism $\phi$ such that $g\phi=w$, so $uwu^{-1}=g\phi\lambda_u\in S'$, where $\lambda_u$ denotes the inner automorphism given by $u$. Conversely, if $z\in S'$, then there is some automorphism $\phi$ such that $z=g\phi$. Also, $z=w^{-1}uw$ for some cyclically reduced word $u$ and $u=z\lambda_{w\inv}=g\phi\lambda_{w\inv}\in S$.

 \end{proof}
 
 \begin{corollary}\label{cor: burillo-ventura}
 	Let $S$ be the set of primitive elements of a nonabelian free group $F_n$. Then $\delta(S)=0$. 
 \end{corollary}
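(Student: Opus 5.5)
The plan is to treat Corollary \ref{cor: burillo-ventura} as the special case of Corollary \ref{cor: shpilrain} where $g$ is a single basis element. Fix a basis $X=\{x_1,\ldots,x_n\}$ of $F_n$ with $n\geq 2$, and take $g=x_1$. By definition, an element is primitive exactly when it belongs to some basis of $F_n$.

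The first step is to check that the set $S$ of primitive elements equals the automorphic orbit $\{x_1\phi \mid \phi\in \mathrm{Aut}(F_n)\}$.
\begin{itemize}
\item If $p$ is primitive, extend it to a basis $\{p,y_2,\ldots,y_n\}$. The map $x_1\mapsto p$, $x_i\mapsto y_i$ for $i\geq 2$ sends a basis to a basis. It therefore defines an automorphism $\phi$ with $x_1\phi=p$.
\item Conversely, let $\phi$ be an automorphism. Then $X\phi$ is a basis of $F_n$ containing $x_1\phi$, so $x_1\phi$ is primitive.
\end{itemize}

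Having identified $S$ with the orbit $S'$ of $x_1$, Corollary \ref{cor: shpilrain} applies directly and gives $\delta(S)=0$.

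The only nontrivial ingredient is already contained in the proof of Corollary \ref{cor: shpilrain}. That proof needs an orbit-blocking word for $x_1$, here a primitivity-blocking word, which exists by Whitehead's work and \cite{[KOOS26]}. Consequently no step of this argument should be an obstacle.
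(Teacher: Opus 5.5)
Your proposal is correct and takes the same route as the paper: the paper states this corollary without a separate proof, as the special case of Corollary \ref{cor: shpilrain} where $g$ is a basis element, since the primitive elements are exactly the automorphic orbit of $x_1$. Your check of that identification (extending a primitive element to a basis to obtain an automorphism, and noting that automorphisms map bases to bases) is the one detail the paper leaves implicit, and you handle it correctly.
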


 \subsection{Rational subsets}
 
   Let $F_n$ be a free group with basis $X=\{x_1,\ldots, x_n\}$, $\Sigma=X\cup X^{-1}$ be a set of monoid generators, and $\pi:\Sigma^*\to F_n$ be the canonical surjective homomorphism. 
   
  Following Koga's approach, we can now prove an analogue of Theorem \ref{sinya} in the context of relative densities with respect to reduced words in the free group.

Given $a\in \Sigma$ and $L\subseteq \sss$, we put $a^{-1}L=\{y\in \sss\mid ay\in L\}$ and $La^{-1}=\{y\in \sss\mid ya\in L\}$. We start by presenting a lemma from \cite{[Kog19]} and proving an analogue of \cite[Lemma 2.2]{[Kog19]} for reduced words.

\begin{lemma}\cite[Theorem 2.3]{[Kog19]}
	\label{lemma koga}
	Let $L\subseteq \sss$ be a rational language. Then there exists $p\geq 1$ such that the following two conditions are satisfied:
	\begin{align*}
		&\forall x,y\in\sss \; [xy\in L \implies \exists x'\in \Sigma^{<p}\; [x'y\in L]]\\
		&\forall x,y\in\sss \; [yx\in L \implies \exists x'\in \Sigma^{<p}\; [yx'\in L]].
	\end{align*}
\end{lemma}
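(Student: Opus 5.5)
We prove Lemma \ref{lemma koga} by passing to a finite deterministic automaton for $L$ and using a pigeonhole argument on accepting paths. Let $\mathcal A=(Q,\Sigma,\delta,q_0,F)$ be a deterministic finite automaton recognizing $L$, with $|Q|=m$. We take $p=m$, or $p=m+1$ to be safe, and the same $p$ serves both conditions.

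For the first condition, suppose $xy\in L$. Let $q=\delta(q_0,x)$, so that $q$ is reachable from $q_0$ and $\delta(q,y)\in F$. Since $q$ is reachable, we choose a shortest word $x'$ with $\delta(q_0,x')=q$. A shortest path visits no state twice: if it did, deleting the loop between the two visits would give a shorter word reaching $q$. Such a path therefore visits at most $m$ states, so $|x'|\le m-1<p$. Then $\delta(q_0,x'y)=\delta(q,y)\in F$, hence $x'y\in L$.

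For the second condition, suppose $yx\in L$. Let $q=\delta(q_0,y)$; from $q$ some accepting state is reachable, namely via $x$. We choose a shortest word $x'$ with $\delta(q,x')\in F$. By the same loop-removal argument, $|x'|\le m-1<p$. Since $\delta(q_0,yx')=\delta(q,x')\in F$, we get $yx'\in L$.

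The argument is short, and the only point needing care is the choice of automaton model. Using a deterministic automaton, which every rational language of $\sss$ admits, ensures that $x$ and $x'$ lead to the same state and that acceptance of the continuation depends only on that state. An alternative is to use the finitely many Myhill–Nerode classes of $L$: each residual $u^{-1}L$ equals some $v^{-1}L$ with $v$ shorter than the number of classes, and symmetrically on the right using left quotients, with $p$ the maximum of the two bounds. I expect no real obstacle beyond this bookkeeping.
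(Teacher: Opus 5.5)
Your proof is correct. Fix a DFA with $m$ states. For the first condition, take a shortest word reaching the state $\delta(q_0,x)$; for the second, take a shortest word leading from $\delta(q_0,y)$ into $F$. Loop removal bounds both lengths by $m-1$, so $p=m$ already suffices and the hedge $p=m+1$ is unnecessary.

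The paper gives no proof of its own for this lemma; it imports the statement from Koga. Your loop-removal argument (equivalently, the Myhill--Nerode version you sketch) is the standard proof of this fact, so there is nothing further to compare.
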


\begin{lemma}\label{lema inversos}
	For any $x\in \Red(X)$ and $L\subseteq \Red(X)$, the following hold:
	\begin{itemize}
		\item  $\oD(x^{-1}L\mid \Red(X))\leq \oD(L\mid \Red(X))(2|X|-1)^{|x|}$;
		\item  $D_{\inf}(x^{-1}L\mid \Red(X))\leq D_{\inf}(L\mid \Red(X))(2|X|-1)^{|x|}$;
		\item $\oD(Lx^{-1}\mid \Red(X))\leq \oD(L\mid \Red(X))(2|X|-1)^{|x|}$; 
		\item  $D_{\inf}(Lx^{-1}\mid \Red(X))\leq D_{\inf}(L\mid \Red(X))(2|X|-1)^{|x|}$).
	\end{itemize}
\end{lemma}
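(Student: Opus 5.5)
Fix $n$ and compare level $n$ of $x^{-1}L$ with level $n+|x|$ of $L$. The map $y\mapsto xy$ is injective from $\Sigma^n\cap x^{-1}L$ into $\Sigma^{n+|x|}\cap L$, because $xy\in L$ by definition of $x^{-1}L$. Here $x^{-1}L$ is understood as a subset of $\Red(X)$: since $xy\in L\subseteq\Red(X)$, the word $y$ is a factor of a reduced word and hence reduced. So the relative density is defined. Therefore
\[
|\Sigma^n\cap x^{-1}L|\leq |\Sigma^{n+|x|}\cap L|.
\]

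Next we use the explicit count $|\Sigma^m\cap \Red(X)|=2|X|(2|X|-1)^{m-1}$ for $m\geq 1$. It gives
\[
\frac{|\Sigma^n\cap x^{-1}L|}{|\Sigma^n\cap \Red(X)|}\leq \frac{|\Sigma^{n+|x|}\cap L|}{|\Sigma^{n+|x|}\cap \Red(X)|}\cdot\frac{|\Sigma^{n+|x|}\cap\Red(X)|}{|\Sigma^{n}\cap\Red(X)|}
=\frac{|\Sigma^{n+|x|}\cap L|}{|\Sigma^{n+|x|}\cap \Red(X)|}\,(2|X|-1)^{|x|}
\]
for all $n\geq 1$. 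Write $a_n$ for the $n$-th ratio of $x^{-1}L$ and $b_m$ for the $m$-th ratio of $L$. We then have $a_n\leq (2|X|-1)^{|x|}\,b_{n+|x|}$ for all $n\geq1$.

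Taking $\limsup$ on both sides gives the first item, since a shifted sequence has the same $\limsup$ as the original. Taking $\liminf$ gives the second item for the same reason, and because $\liminf$ is monotone and multiplication by a positive constant commutes with it.

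For the last two items we use the map $y\mapsto yx$ from $\Sigma^n\cap Lx^{-1}$ to $\Sigma^{n+|x|}\cap L$. It is again injective, and the same inequality $a_n\leq (2|X|-1)^{|x|}b_{n+|x|}$ follows verbatim.

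There is no real obstacle here. The only points needing care are that $n=0$ (where the count formula fails) does not affect limits, and that the shift by $|x|$ preserves $\limsup$ and $\liminf$.
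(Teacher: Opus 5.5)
Your proposal is correct and is essentially the paper's argument. The paper passes through the intermediate language $x(x^{-1}L)\subseteq L$, whose level-$n$ slice is $x\bigl((x^{-1}L)\cap\Sigma^{n-|x|}\bigr)$, computes its relative density exactly as $(2|X|-1)^{-|x|}$ times that of $x^{-1}L$ using the count of reduced words, and then uses monotonicity; this is the same injection-plus-counting you use, packaged differently from your termwise inequality $a_n\le (2|X|-1)^{|x|}b_{n+|x|}$.
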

\begin{proof}
We start by proving that 
\begin{align}
	\label{eq1}
\oD(x(x^{-1}L)\mid \Red(X))=\oD(x^{-1}L\mid \Red(X))\cdot
(2|X|-1)^{-|x|}.
\end{align} 
 Notice that $x(x^{-1}L)\subseteq L\subseteq \Red(X)$ and that $x^{-1}L\subseteq \Red(X)$ because $\Red(X)$ is factorial.
Let $n>|x|$. Then $x(x^{-1}L)\cap \Sigma^n=x\left((x^{-1}L)\cap \Sigma^{n-|x|}\right)$, so
\begin{align*}
	\frac{|(x(x^{-1}L)\cap \Sigma^n|}{|\Sigma^n\cap \Red(X)|}
	=&\frac{|x\left((x^{-1}L)\cap \Sigma^{n-|x|}\right)|}{2|X|(2|X|-1)^{n-1}}\\
	=&\frac{|x^{-1}L\cap \Sigma^{n-|x|}|}{2|X|(2|X|-1)^{n-|x|-1}}\cdot \frac{1}{(2|X|-1)^{|x|}}\\
	=&\frac{|x^{-1}L\cap \Sigma^{n-|x|}|}{|\Sigma^{n-|x|}\cap \Red(X)|}\cdot \frac{1}{(2|X|-1)^{|x|}}.
\end{align*}
	Taking $\limsup_{n\to \infty}$ in both sides, we get (\ref{eq1}).
	
 Now, since $x(x^{-1}L)\subseteq L$, we have that  $\oD(L\mid \Red(X))\geq \oD(x(x^{-1}L)\mid \Red(X))=\oD(x^{-1}L\mid \Red(X))\cdot
 (2|X|-1)^{-|x|}$, and so, 
 $$\oD(x^{-1}L\mid \Red(X))\leq (2|X|-1)^{|x|} \oD(L\mid \Red(X)).$$
 The cases for $Lx^{-1}$ and for $D_{\inf}$ are analogous.
\end{proof}

We can now prove Theorem \ref{sinya} for reduced words.

 \begin{theorem}\label{sinya analogue}
 	Let $L\subseteq \Red(X)$ be a rational language and let $N=\{w\in \Red(X)\mid \sss w \sss \cap L \neq\emptyset\}$. Then,  the following are equivalent:
 	\begin{enumerate}
 		\item $D_{\sup}^*(L\mid\Red(X))=0;$
 		\item $\oD(L\mid\Red(X))=0;$
 		\item  $D(L\mid \Red(X))=0;$
 		\item  $D(N\mid \Red(X))=0;$
 		\item  $\exists w\in \Red(X) : \sss w \sss \cap L=\emptyset.$
 	\end{enumerate}
 \end{theorem}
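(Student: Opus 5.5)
We will prove the equivalences along the chain (1)$\iff$(2)$\iff$(3), (5)$\Rightarrow$(3), (3)$\Rightarrow$(4)$\Rightarrow$(5), following Koga's strategy.

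The first equivalences are already available. Proposition \ref{null cumulative} gives (1)$\iff$(2), and the first item of Lemma \ref{dod} gives (2)$\iff$(3). For (5)$\Rightarrow$(3) we apply Corollary \ref{infinitemonkey reduced} directly: a reduced forbidden factor $w$ gives $D(L\mid\Red(X))=0$.

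For (4)$\Rightarrow$(5) we argue by contraposition. If every reduced word is a factor of some word of $L$, then $N=\Red(X)$, so $D(N\mid\Red(X))=1\neq 0$.

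The main step is (3)$\Rightarrow$(4). Assume $D(L\mid\Red(X))=0$ and let $p$ be given by Lemma \ref{lemma koga} for $L$. Take $w\in N$, so $xwy\in L$ for some $x,y$. Applying the first condition of Lemma \ref{lemma koga} to the factorization $x\cdot(wy)$ gives $x'\in\Sigmap$ with $x'wy\in L$. Applying the second condition to $(x'w)\cdot y$ gives $y'\in\Sigmap$ with $x'wy'\in L$. Every word of $L$ is reduced, so $x'$ and $y'$ are reduced and $w\in x'^{-1}Ly'^{-1}$. Hence
$$N\subseteq \bigcup_{x',y'\in\Sigmap\cap\Red(X)} x'^{-1}(Ly'^{-1}),$$
a finite union of sets, each contained in $\Red(X)$ since $\Red(X)$ is factorial. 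By Lemma \ref{lema inversos}, applied twice,
$$\oD(x'^{-1}(Ly'^{-1})\mid\Red(X))\le (2|X|-1)^{|x'|+|y'|}\,\oD(L\mid\Red(X))=0,$$
using $\oD(L\mid\Red(X))=0$, which follows from (3). The second item of Lemma \ref{dod} makes $\oD$ subadditive over this finite union, and $\oD$ is monotone under inclusion, so $\oD(N\mid\Red(X))=0$. The first item of Lemma \ref{dod} then gives $D(N\mid\Red(X))=0$.

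The delicate point in this step is the bookkeeping. One must check that the quotient languages stay inside $\Red(X)$, which holds because $\Red(X)$ is factorial, so that Lemmas \ref{lema inversos} and \ref{dod} apply. One must also check that only finitely many pairs $(x',y')$ occur, which holds because $p$ is fixed by $L$. Both are short verifications.
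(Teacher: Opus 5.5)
Your proof is correct and follows the paper's argument essentially step for step. It uses the same citations for (1)$\iff$(2)$\iff$(3) and (5)$\Rightarrow$(3), and the same double application of Lemma \ref{lemma koga} to cover $N$ by finitely many quotients $x'^{-1}Ly'^{-1}$, whose upper densities vanish by Lemma \ref{lema inversos} and are summed via Lemma \ref{dod}. Your remark that $x',y'$ are automatically reduced, being factors of a word of $L$, is a small gain in rigor over the paper, since Lemma \ref{lema inversos} is stated only for reduced $x$.
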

 
 \begin{proof}
 	The equivalence between 1 and 2 follows from Proposition \ref{null cumulative} and the one between 2 and 3 from  Lemma \ref*{dod},
 	the implication $5\implies 3$ is given by Corollary \ref{infinitemonkey reduced} and it is obvious that $4\implies 5$. We now prove that $3\implies 4$, that is, we show that if $L\subseteq \Red(X)$ is a rational language such that  $D(L\mid \Red(X))=0$, and    $N=\{w\in \Red(X)\mid \sss w \sss \cap L \neq\emptyset\}$ then $D(N\mid \Red(X))=0.$

 	 Let $L$ be such a language. By Lemma \ref{dod}, we have that $\oD(L\mid \Red(X))=0$. If $N=\emptyset$, we are done. If not, let $s\in N$. Then, there are $a,b\in \sss$ such that $asb\in L$, which, by Lemma \ref{lemma koga} (with $x=a$ and $y=sb$), means that there is some $a'\in \Sigmap$ such that $a'sb\in L$. Now applying Lemma \ref{lemma koga} again (now with $y=a's$ and $x=b$), we have that there must be some $b'\in \Sigmap$ such that $a'sb'\in L$. 
 	
 	Hence, for every $s\in N$, there must be some $a,b\in \Sigmap$ such that $asb\in L$.
 	Equivalently, for all $s\in N$, there are $a,b\in \Sigmap$ such that $s\in a^{-1}Lb^{-1}$. Hence,
 	$$N\subseteq \bigcup_{a,b\in \Sigmap} a^{-1}Lb^{-1}\subseteq \Red(X),$$ so, by Lemma \ref{lema inversos},
 	
 \begin{align*}	
 	\oD(N\mid \Red(X))&\leq \oD\left( \bigcup_{a,b\in \Sigmap} a^{-1}Lb^{-1}\mid \Red(X)\right)\\
 	&\leq \sum_{a,b\in\Sigmap} \oD(a^{-1}Lb^{-1}\mid\Red(X))\\
 	&\leq   \sum_{a,b\in\Sigmap}(2|X|-1)^{|a|+|b|} \oD(L\mid \Red(X))\\
 	&=0.
 	\end{align*}
 	
 	Since $\oD(N\mid \Red(X))\geq 0$, we deduce that $\oD(N\mid\Red(X))=0$, which, by Lemma \ref{dod}, means that $D(N\mid\Red(X))=0$.
 \end{proof}

 By Benois's Theorem, rational subsets of groups are rational languages of reduced words. So, in our terminology, if $L$ is the rational language of reduced words representing the elements in a given rational subset $S\in \Rat(F_X)$ of a nonabelian free group, then $$\delta(S)=D_{\sup}^*(L\mid \Red(X)).$$ In particular, by Theorem \ref{sinya analogue}, $\delta(S)=0$ if and only if $\oDc(L\mid \Red(X))=0$   if and only if $\oD(L\mid \Red(X))=0$. Using the previous result, we can describe rational subsets of the free group with positive density.

 Given a word $w$ over the generators, let $\overline w$ be the reduced word representing the same element as $w$, that is, such that $w\pi=\overline w\pi$.
 
 \begin{theorem}\label{2cosets}
 Let $S\subseteq F_X$ be a rational subset of a nonabelian free group.  Then $\delta(S)>0$ if and only if $F_X$ is there is some $k\in\N$ and finitely many elements $a_i,b_i\in F_X$, with $i\in [k]$ such that 
 $F_X=\bigcup_{i,j\in [k]} a_iSb_j$.
 \end{theorem}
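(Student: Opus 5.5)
The plan is to prove each direction separately, using Theorem \ref{sinya analogue} for the forward direction and a simple counting argument for the converse. Throughout, $L=\overline{S}$ denotes the rational language of reduced words representing $S$ (Benois' Theorem), so that $\delta(S)=\oDc(L\mid \Red(X))$.

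($\Rightarrow$) Suppose $\delta(S)>0$. By Theorem \ref{sinya analogue}, condition 5 fails, so every reduced word is a factor of some word in $L$: for every $w\in\Red(X)$ there are $a,b$ with $awb\in L$. Repeating the argument in the proof of that theorem, Lemma \ref{lemma koga} lets us take $a,b\in\Sigmap$ for a fixed $p$ depending only on $L$. Since $awb$ is a reduced word equal in $F_X$ to $(a\pi)(w\pi)(b\pi)$, we obtain $w\pi\in (a\pi)^{-1}S(b\pi)^{-1}$.

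Now let $\{c_1,\dots,c_k\}$ be the finite set of elements of $F_X$ represented by words of length $<p$ (a set closed under inverses), and set $a_i=b_i=c_i$. Every $g\in F_X$ is $w\pi$ for its reduced representative $w$, so $g\in c_i S c_j$ for some $i,j$. Hence $F_X=\bigcup_{i,j} a_iSb_j$.

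($\Leftarrow$) Suppose $F_X=\bigcup_{i,j\in[k]} a_iSb_j$. Let $m=\max_{i,j}(|a_i|+|b_j|)$, where lengths are reduced lengths. Any $g\in B(n)$ can be written $g=a_isb_j$ with $|s|\le n+m$, so the map $(i,j,s)\mapsto a_isb_j$ from $[k]^2\times (S\cap B(n+m))$ covers $B(n)$. This gives $|B(n)|\le k^2|S\cap B(n+m)|$, and therefore
\[
\frac{|S\cap B(n+m)|}{|B(n+m)|}\ge \frac{1}{k^2}\cdot\frac{|B(n)|}{|B(n+m)|}.
\]
Since $|B(n)|$ grows like a constant times $(2|X|-1)^n$, the right-hand ratio tends to $(2|X|-1)^{-m}/k^2>0$. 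Taking the $\limsup$ yields $\delta(S)>0$.

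The main obstacle is the forward direction, specifically ensuring the finiteness of the set of translators. This is exactly what Koga's lemma provides: it is where the rationality of $S$ is used, and it rests entirely on the negation of condition 5 in Theorem \ref{sinya analogue}. The converse is routine and does not even require rationality.
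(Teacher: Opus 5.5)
Your proof is correct. The forward direction rests on the same idea as the paper's, while the converse takes a different, more direct route.

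For $(\Rightarrow)$, the paper works with the minimal automaton of $L$. For each state $q$ it fixes a word $i_q$ leading from the initial state to $q$ and a word $f_q$ leading from $q$ to a final state. It then replaces the context $x,y$ of an occurrence $xwy\in L$ by $i_q,f_{q'}$, where $w$ is read from $q$ to $q'$, so its translators are indexed by states. You instead shorten the context with Lemma~\ref{lemma koga}, exactly as in the proof of Theorem~\ref{sinya analogue}, and use the whole ball $B(p-1)$ as translators. This is the same finiteness mechanism with a coarser set of translators.

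For $(\Leftarrow)$, the paper argues by contradiction. Assuming $\delta(S)=0$, it shows each $\delta(a_iSb_j)=0$ by comparing the sphere of radius $n$ in $a_iSb_j$ with the sphere of radius $n+|a_i|+|b_j|$ in $S$. It then uses subadditivity against $\delta(F_X)=1$. Your direct covering count $|B(n)|\le k^2|S\cap B(n+m)|$ is cleaner and also more careful. Because of cancellation, an element of length $n$ in $a_iSb_j$ comes from an element of $S$ of length \emph{at most} $n+|a_i|+|b_j|$, so balls, as you use them, are the right objects. The paper's sphere-to-sphere inequality should have a ball on the right.

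Your route also yields more. Your lower bound holds for every sufficiently large radius, so you actually get $\liminf_{N}|S\cap B(N)|/|B(N)|>0$, not just $\delta(S)>0$. Combined with your forward direction, this shows that a rational subset with $\delta(S)>0$ automatically has positive lower density on balls. That bears directly on the question about $\delta_{\inf}$ raised in the paper's final section.
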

 \begin{proof}
	Suppose that $\delta(S)>0$. By Benois's Theorem, the language $L$ of reduced words representing elements in $S$ is rational. Notice that $\delta(S)=\oDc(L\mid \Red(X))$. Let $\mathcal{A}=\{Q,q_0, F, \delta\}$ be the minimal automaton recognizing $L$. For each state $q\in Q$, take $i_q,f_q\in \Red(X)$ such that 
 	$$q_{0}\xrightarrow{i_q}q\xrightarrow{f_{q}}q_{f}$$
 	is a walk in $\mathcal{A}$, for some $q_f\in F$.
 		
 	 We claim that $F_X=\bigcup_{(q,q')\in Q\times Q} i_q^{-1} S f_q^{-1}$. Let $w\in \Red(X)$. Since $\oDc(L\mid \Red(X))>0$, by Theorem \ref{sinya analogue}, there are some $x,y\in\Red(X)$ such that $xwy\in L$. Let $q,q'\in Q$ be such that  
 	  	$$q_{0}\xrightarrow{x}q\xrightarrow{w}q'\xrightarrow{y}q_{f}$$
 	  	is a path in $\mathcal{A}$, for some $q_f\in F$. By definition of $i_q$ and $f_{q'}$, the word $i_q w f_{q'}$ also belongs to $L$. Hence, $w\in i_q\inv S f_{q'}\inv$. 	
 	  	
 	  	Conversely, let $S$ be a rational subset of $F_X$ such that $F_X=\bigcup_{i,j\in [k]} a_iSb_j$ for some $k\in \N$ and $a_i,b_i\in F_X$ for $i\in [k]$ and let $L$ be the language of reduced words representing elements in $S$, which is rational by Benois's Theorem. Suppose that $\delta(S)=0$, that is, that $$\oD(L\mid\Red(X))=0.$$ We will show that, in this case,  $\delta(a_iSb_j)=0$, for all $i,j\in K$, which   by Lemma \ref{dod} would imply that $\delta(S)\leq \sum_{i,j\in [k]} \delta(a_iSb_j)=0$, a contradiction.
 	  	
		For $i, j\in [k]$ and $s\in L$, we have that $s=\overline{a_i\inv(a_isb_j)b_j\inv}$, and so, that 
		$$|s|\leq |a_i|+|b_j|+ |\overline{a_isb_j}|.$$
		Hence, 
 	  	$$|\Sigma^n\cap a_iSb_j|\leq |\Sigma^{n+|a_i|+|b_j|}\cap S|$$
 	  	and 
 	  	\begin{align*}
 	    \limsup_{n\to \infty} \frac{|\Sigma^n\cap a_iSb_j|}{|\Sigma^n\cap \Red(X)|}
 	  	&\leq \limsup_{n\to \infty} \frac{|\Sigma^{n+|a_i|+|b_j|}\cap S|}{|\Sigma^n\cap \Red(X)|}\\
 	  	&=\limsup_{n\to \infty} \left(\frac{|\Sigma^{n+|a_i|+|b_j|}\cap S|}{|\Sigma^{n+|a_i|+|b_j|}\cap \Red(X)|}\right)\left( \frac{|\Sigma^{n+|a_i|+|b_j|}\cap \Red(X)|}{|\Sigma^n\cap \Red(X)|}\right)\\
 	  	&= (2k-1)^{|a_i|+|b_j|}\delta(S)\\
 	  	&=0,
 	  	\end{align*}
 	  	so $\delta(a_iSb_j)=0$.
 \end{proof}

 \subsection{Finitely generated subgroups}
 It is proved in \cite{[BV02]} that, under some  assumptions on the growth of a group $G$, subgroups $H$ of finite index have density $\frac{1}{[G:H]}$ and that, in that case, the limit always exists. However, in the free group, taking for example the subgroup $H$ of words of even length,  we get that $H$ is a subgroup of index two, and denoting by $L$ the language of reduced words representing elements of $H$, we get that but $D^*(L\mid \Red(\{a,b\}))$ does not exist as  $\oD^*(L\mid \Red(\{a,b\}))=\frac 3 4$ and $D_{\inf}^*(L\mid \Red(\{a,b\}))=\frac{1}{4}$.
 
\begin{remark}
	This subgroup gives us an example where $\oD(L\mid \Red\{a,b\})\neq \oD^*(L\mid \Red\{a,b\})$ and $D_{\inf}(L\mid \Red\{a,b\})\neq D_{\inf}^*(L\mid \Red\{a,b\})$, as it is easy to see that $D_{\inf}(L\mid \Red\{a,b\})=0$ and $D_{\sup}(L\mid \Red\{a,b\})=1$.
\end{remark} 
 
  We now give a language-theoretic proof that the subgroups of positive density in a nonabelian free group are precisely the finite index ones.

 \begin{corollary}\label{cor: fi}
 	 Let $H\leq_{f.g.} F_X$ be a finitely generated subgroup of a nonabelian free group.  Then $\delta(H)>0$ if and only if $H$ has finite index.
\end{corollary}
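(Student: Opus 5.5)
We plan to derive this directly from Theorem \ref{2cosets}, using Theorem \ref{AnisimovSeifert} to place $H$ in $\Rat(F_X)$. Since $H$ is finitely generated, it is a rational subset. So $\delta(H)>0$ holds if and only if there are $k\in\N$ and $a_i,b_j\in F_X$ with $F_X=\bigcup_{i,j\in[k]} a_iHb_j$. It then suffices to show that such a finite covering by ``double translates'' exists exactly when $H$ has finite index.

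If $H$ has finite index, take $a_1,\dots,a_k$ to be a set of left coset representatives and $b_j=1$ for all $j$. Then $\bigcup_{i,j} a_iHb_j=\bigcup_i a_iH=F_X$, and Theorem \ref{2cosets} gives $\delta(H)>0$.

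For the converse, suppose $F_X=\bigcup_{i,j\in[k]} a_iHb_j$. We rewrite each translate as a left coset of a conjugate: $a_iHb_j=a_ib_j(b_j^{-1}Hb_j)=a_ib_jH^{b_j}$, where $H^{b}=b^{-1}Hb$. Thus $F_X$ is a finite union of left cosets of the finitely many subgroups $H^{b_1},\dots,H^{b_k}$. By B.H. Neumann's lemma, if a group is a finite union of cosets of subgroups, then at least one of those subgroups has finite index. Hence some $H^{b_j}$ has finite index in $F_X$. Since $[F_X:H^{b_j}]=[F_X:H]$, the subgroup $H$ has finite index.

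The only nontrivial step is invoking Neumann's lemma, which lies outside the paper. To keep the argument language-theoretic, there is an alternative route. Assume $H$ has infinite index. Then its Stallings graph is finite but not a covering, so some vertex lacks an outgoing edge for some letter. From this we can build a reduced word that is not a subword of any reduced word of $H$ (a standard ``dead end'' argument that reads into a missing edge from every vertex). Theorem \ref{sinya analogue} then yields $\delta(H)=0$. We would present the Neumann-lemma argument as the main proof, since it is short, and mention the Stallings-graph argument as the self-contained alternative.
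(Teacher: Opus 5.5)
Your proof is correct and, for the substantive direction, it is the same as the paper's: apply Theorem \ref{2cosets}, then Neumann's lemma to the cosets $a_ib_j(b_j^{-1}Hb_j)$, and your justification that conjugate subgroups have the same index is more precise than the paper's appeal to the conjugates being isomorphic to $H$. For the easy direction, the paper instead observes that a finite-index subgroup has a saturated Stallings automaton, so its language has no forbidden subword and Theorem \ref{sinya analogue} gives $\delta(H)>0$; your choice of left coset representatives with $b_j=1$ in Theorem \ref{2cosets} is an equally valid shortcut.
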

\begin{proof}
	By Theorem \ref{2cosets}, there is a natural number $k\in \N$ and  finitely many elements $a_i,b_i\in F_X$, with $i\in [k]$ such that 
	\begin{align*}
	F_X=&\bigcup_{i,j\in [k]} a_iHb_j\\
	=&\bigcup_{i,j\in [k]} a_ib_j(b_j\inv Hb_j)
\end{align*}
By a Lemma of Neumann's \cite[Lemma 4.1]{[Neu54]}, we have that if $G$ is a union of finitely many cosets o subgroups, then at least one of the subgroup involved must be of finite index. Since all subgroups $b_j\inv H b_j$ are isomorphic (and isomorphic to $H$), we deduce that $H$ has finite index.

Conversely, if $H$ is a finite index subgroup, then its Stallings automaton is saturated, and so the language of reduced words representing elements of $H$ has no forbidden subword.
\end{proof}

\begin{remark}
	We remark that, as rational subsets generalize finitely generated subgroups,  recognizable subsets generalize finite index subgroups. However, there are rational subsets of $F_2$ with positive density that are not recognizable. For instance, the set $S$ of all reduced words starting with $a$ clearly has positive density as it has no forbidden reduced subword. It is easy to see that $S$ is not recognizable, so rational subsets of positive density properly generalize the notion of recognizable subsets.
\end{remark}
 
Notice that, in the above example, while we don't have that $D(L\mid\Red(\{a,b\}))=\frac{1}{[F_2:H]}$, it is true that $$\frac{D_{\inf}(L\mid\Red(\{a,b\}))+D_{\sup}(L\mid\Red(\{a,b\}))}{2}=\frac{1}{[F_2:H]}.$$ We will show that this always holds in a nonabelian free group and that we have convergence if and only if the finite index subgroup has some word of odd length, that is, if is not contained in $H$.

\begin{proposition}\label{even bipartite}
	Let $H\leq F_n$ be a subgroup of a free group and $\mc G$ be its Stallings graph. The following are equivalent:
	\begin{enumerate}
		\item $H\leq F_n^{even}$;
		\item all closed loops in $\mc G$ have even length;
		\item $\mc G$ is bipartite.
	\end{enumerate} 
\end{proposition}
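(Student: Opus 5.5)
We prove the cycle of implications $1\Rightarrow 2\Rightarrow 3\Rightarrow 1$. Throughout we use the standard facts about the Stallings graph $\mc G$ of $H$. It is a connected folded $X$-labelled graph with base vertex $v_0$ (in which every vertex other than $v_0$ has degree at least $2$). Its reduced closed paths at $v_0$ read exactly the reduced words representing elements of $H$. Here $F_n^{even}$ denotes the subgroup of elements whose reduced words have even length, i.e. the kernel of the homomorphism $F_n\to \Z/2\Z$ sending every basis letter to $1$. Parity of length is invariant under free reduction, so this homomorphism may be applied to arbitrary words.

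For $1\Rightarrow 2$, let $\gamma$ be a closed path in $\mc G$ based at a vertex $v$, reading a word $w$, not necessarily reduced. By connectedness, choose a path $p$ from $v_0$ to $v$ reading $u$. Then $p\gamma p^{-1}$ is a closed path at $v_0$, so the word $uwu^{-1}$ represents an element of $H$. This holds because reading any (possibly backtracking) closed path at $v_0$ gives an element of $\pi_1(\mc G,v_0)$, which is identified with $H$. Hence $uwu^{-1}$ has even length, and since its length is $2|u|+|w|$, the length $|w|$ is even.

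For $2\Rightarrow 3$, we use the standard graph-theoretic argument. Fix the base vertex $v_0$ and colour each vertex $v$ by the parity of the length of some path from $v_0$ to $v$. This is well defined: if two paths $p,q$ from $v_0$ to $v$ had lengths of different parity, then $pq^{-1}$ would be a closed path of odd length, contradicting 2. Every edge then joins vertices of opposite colour, since appending the edge to a path changes its length by one. Thus $\mc G$ is bipartite.

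For $3\Rightarrow 1$, in a bipartite graph every closed path alternates between the two parts and so has even length. In particular, each reduced closed path at $v_0$ has even length, so every reduced word representing an element of $H$ has even length, and $H\leq F_n^{even}$.

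The only point requiring care is in $1\Rightarrow 2$: closed paths need not be reduced and need not pass through $v_0$. This is handled by the conjugation trick together with the observation that parity is preserved under free reduction. Beyond that, the argument is routine.
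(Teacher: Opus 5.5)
Your proof is correct and follows the same cycle $1\Rightarrow 2\Rightarrow 3\Rightarrow 1$ as the paper: the conjugation trick $uwu^{-1}$ for $1\Rightarrow 2$ and the alternation argument for $3\Rightarrow 1$ are the paper's. The only difference is in $2\Rightarrow 3$, where you colour vertices by the parity of an arbitrary path from $v_0$ (well defined by hypothesis 2), while the paper uses the parity of the distance $d(1,v)$ and builds an odd cycle from geodesics; this is a cosmetic variation of the same standard argument.
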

\begin{proof}
	Since $\mc G$ is the Stallings graph corresponding to $H$, then, if $H\leq F_n^{even}$, then every closed loop at the basepoint must have even length and so every closed loops must have even length: if there was a loop of odd length  labelled by $w$ somewhere in $\mc G$, since $\mc G$ is connected, we would have a closed path at the basepoint  labelled by $uwu^{-1}$, and so, of odd length. So, we have shown that $1	\Rightarrow 2$.
	
	Let $V$ be the set of vertices in $\mc G$. If all closed loops in $\mc G$ have even length, define $E=\{v\in V\mid d(1,v) \text{ is even}\}$ and $O=V\setminus E$. Suppose that there is an edge $p \xrightarrow{e} q$ for $p,q\in E$. Take geodesic paths $u,v$ from $1$ to $p$ and from $q$ to $1$, respectively (notice that $u$ and $v$ have even length). Then, 
	$1 \xrightarrow{u} p \xrightarrow{e} q\xrightarrow{v} 1$ is a cycle of odd length, which is absurd. Similarly, if there is an edge  $p \xrightarrow{e} q$ for $p,q\in O$ and taking geodesic paths $u,v$ from $1$ to $p$ and from $q$ to $1$, respectively, 	$1 \xrightarrow{u} p \xrightarrow{e} q\xrightarrow{v} 1$ is a cycle of odd length. So $2\Rightarrow 3$.
	
	Now we prove that $3\Rightarrow 1$. Suppose that $\mc G$ is bipartite. Then the set of vertices of $\mc G$ can be partitioned in two sets $U$ and $V$ such that every path alternates between vertices in $U$ and vertices in $V$. Hence, a path starting in $U$ is in $U$ after an even number of steps and in $V$ after an odd number of steps. So, loops starting in the basepoint must have even length, so every element in $H$ must have even length. 
\end{proof}

We consider non-backtracking random walks in the Stallings graph $\mc G(V,E)$ of a finite index subgroup $H$ of a nonabelian free group. Notice that the Stallings graph of $H$ is saturated, in the sense that, for every vertex $v$ and generator $x$, there is an  edge labelled by $x$ with endpoint $v$ (the edge can be either incoming our outgoing). So, any reduced word can be read from any vertex. Elements in $H$ are the reduced words labelling closed loops starting (and ending) in the basepoint $1$. Hence, letting $L$ be the language of reduced words representing elements of $H$, the probability that, starting in the basepoint, we stay in the basepoint after $n$ steps is precisely $\frac{|L\cap \Sigma^n|}{|\Sigma^n|}.$ This random walk defines a Markov chain with  state space to be the set of edges. The period of a state is  defined as the greatest common divisor of the length of all loops starting (and ending) in that given state $e$. If all states have period $1$, the random walk is said to be aperiodic.

\begin{theorem}\label{period 1 2}
	Let $H\leq F_n$ be a finite index subgroup of a nonabelian free group and $\mc G$ be its Stallings graph. The non-backtracking random walk on $\mc G$ is aperiodic if and only if $H$ has a word of odd length, that is if $\mc G$ is not bipartite. If not, every edge has period $2$.
\end{theorem}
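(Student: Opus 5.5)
We work with the non-backtracking walk on the Stallings graph $\mc G$ of $H$. Its state space is the set of directed edges: each edge of $\mc G$ is taken in both orientations, and an oriented edge labelled $x$ read backwards is labelled $x\inv$. A transition $e\to e'$ is allowed when the terminal vertex of $e$ is the initial vertex of $e'$ and $e'$ is not the reverse of $e$.

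Since $H$ has finite index, $\mc G$ is finite and saturated, so every vertex has degree $2n\ge 4$. Hence each state has exactly $2n-1$ successors, and a closed walk of length $\ell$ in the Markov chain is the same thing as a closed reduced (non-backtracking) path of length $\ell$ in $\mc G$. We would proceed in three steps.

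\textbf{Step 1: irreducibility.} We show that any oriented edge $e$ can be followed by a non-backtracking path ending with any prescribed oriented edge $f$. Let $v$ be the terminus of $e$ and $u$ the origin of $f$. Since $\mc G$ is connected, take a reduced path $p$ from $v$ to $u$. The concatenation $e\,p\,f$ may backtrack at its two junctions. Because every vertex has degree at least $4$ and $\mc G$ is finite, from any vertex and any forbidden direction we can leave along another edge and continue without backtracking until we return; this gives non-backtracking closed detours. We insert such a detour at $v$ (leaving by an edge different from $e\inv$ and from the first edge of $p$) and similarly at $u$ to remove any cancellation. Thus the chain is irreducible, and all states share a common period $d$.

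\textbf{Step 2: the bipartite case.} If $\mc G$ is bipartite, every closed path has even length, so in particular every closed reduced path does. Hence $2\mid d$. To get $d=2$, it suffices to find, at some edge $e$, two closed non-backtracking walks whose lengths have greatest common divisor $2$. Take a reduced cyclically reduced loop $\gamma$ at the origin of $e$ that starts with $e$; its length $\ell$ is even. The word $\gamma^2$ gives a walk of length $2\ell$, but we need length $\ell+2$ or similar, so we would splice in a short back-and-forth detour. Since backtracking is forbidden, we instead use a lollipop: go out along an edge, around a cycle, and come back. With degree at least $4$ there is enough room to combine two cycles $c_1,c_2$ based at a common vertex into non-backtracking walks $c_1c_2$ and $c_1c_2c_2$, whose lengths differ by $|c_2|$. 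Varying the choice of cycles, and using irreducibility, the set of closed-walk lengths at $e$ is closed under addition. We expect it to contain two even numbers with gcd $2$. The concrete route is to show that for any two cycles through a vertex, the lengths $|c_1|+|c_2|$ and $|c_1|+2|c_2|$ are both attainable, and then use a cycle of length $\equiv 2 \pmod 4$ together with one of length $\equiv 0\pmod 4$. If all cycle lengths were divisible by $4$, we would use lengths $2\ell$ and $2\ell+2m$ arising from paths through different vertices.

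\textbf{Step 3: the non-bipartite case.} By Proposition \ref{even bipartite}, $\mc G$ has a closed path of odd length, and therefore a cyclically reduced cycle of odd length (a loop edge or an odd cycle). Using the irreducibility construction of Step 1, at any edge $e$ we can build closed non-backtracking walks $e\,p\,c\,q$ and $e\,p\,c^2\,q$, whose lengths differ by the odd length $|c|$. Hence $d$ divides an odd number; since $d$ also divides the even number $2|c|$ from $e\,p\,c^3 q$ versus $e\,p\,c\,q$, we get $d\mid \gcd(\text{odd},\,\text{even})$, and $d=1$.

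\textbf{Main obstacle.} The hardest point is the gcd-$2$ claim in Step 2: showing the period is exactly $2$ and not $4$ or larger. The route we would try first is to use a vertex $v$ and three distinct edges at $v$ (available because the degree is at least $4$). With these we would produce two closed non-backtracking walks that traverse the same cycle once, respectively traverse an extra path of length $2$, namely an edge out to a neighbour $w$ and an edge back by a different edge. Such a path exists only if there are multiple edges, so in general we would instead compare two cycles sharing a vertex and use that their symmetric-difference lengths can be made $\equiv 2 \pmod 4$. This needs a careful combinatorial argument, and it is the step most likely to require adjustment.
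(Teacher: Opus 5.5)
Your proposal has a genuine gap, and it is the same gap in Steps 2 and 3: you never prove that the period $d$ divides $2$, and that is the real content of the theorem.

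In Step 3 the arithmetic does not give what you claim. The walks $e\,p\,c\,q$, $e\,p\,c^2q$, $e\,p\,c^3q$ give $d\mid |c|$ and $d\mid 2|c|$. But $\gcd(|c|,2|c|)=|c|$, so you only learn that $d$ is odd; nothing excludes, e.g., $d=3$ when $|c|=3$.

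In Step 2 the argument is explicitly unfinished. The mod-$4$ strategy would not suffice even if completed: lengths $6$ and $12$ are $\equiv 2$ and $\equiv 0 \pmod 4$, yet their gcd is $6$, so excluding multiples of $4$ does not exclude $d=6$. What is missing is a uniform reason why the lengths of closed non-backtracking walks cannot all be divisible by a common $k\geq 3$.

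The paper gets this from the finite index hypothesis. Suppose every closed reduced path at a vertex $v$ has length divisible by $k$. Pick $z$ labelling a path from $v$ to the basepoint, and a letter $x_i$ with $zx_iz^{-1}$ reduced. Some power $x_i^m$ lies in $H$, and comparing $zx_i^mz^{-1}$ with $zx_i^{2m}z^{-1}$ gives $k\mid m$ and $k\mid 2|z|$. A further element of $H$, built from $x_i^m$ and a power of $x_i^{-(m-1)}x_j^{-\varepsilon}$ lying in $H$, then gives a closed reduced path at $v$ of length $\equiv 2 \pmod k$. Hence $k\mid 2$.

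Within your own edge-chain framework there is also a short local fix. Since the chain is irreducible (your Step 1), its states split into cyclic classes $C_0,\dots,C_{d-1}$, with every allowed transition going from $C_r$ to $C_{r+1}$. At a vertex $v$ of degree $D\geq 3$, the allowed transitions from the $D$ oriented edges entering $v$ to the $D$ edges leaving it form $K_{D,D}$ minus a perfect matching. This graph is connected, so all edges entering $v$ lie in one class $C_{r(v)}$ and all edges leaving $v$ lie in $C_{r(v)+1}$. Now take an edge $e$ from $u$ to $v$: $e$ itself gives $r(v)\equiv r(u)+1$, and its reverse gives $r(u)\equiv r(v)+1 \pmod d$. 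Hence $d\mid 2$. The degree hypothesis holds here, since every vertex of the Stallings graph of a finite-index subgroup has degree $2n\geq 4$.

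With $d\mid 2$ in hand, your Steps 2 and 3 close immediately. Bipartiteness forces $2\mid d$, so $d=2$. An odd cycle, which is a cyclically non-backtracking closed walk, forces $d$ odd, so $d=1$.
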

\begin{proof}
	Let $X=\{x_1,\ldots, x_n\}$ be the basis of the free group. Suppose that there is some vertex $v$ such that all loops starting in $v$ have a multiple of some $k\geq 3$ as their length and $z$ be a reduced word labelling a path from $v$ to the basepoint.
	
	Let $x_i$ be such that the product $zx_iz^{-1}$ is reduced. 	
	Since $H$ has finite index, for every $g\in F_n$, there is some $k\in \N$ such that $g^k\in H$. Take $m$ such that $x_i^m\in H$. Also, $x_i^{2m}\in H$, so $zx_i^mz^{-1}$ and $zx_i^{2m}z^{-1}$ label closed paths starting in $v$. Since the products are reduced, then $|zx_i^m z\inv|=2|z|+m$ and $|zx_i^{2m}z\inv|=2|z|+2m.$
	It follows that  
	\begin{align*}
		\begin{cases}
			&2|z|+m\equiv_k 0\\
			&  2|z|+2m\equiv_k 0.
		\end{cases}
	\end{align*}
	Hence, $m\equiv_k 0$.

	Let $x_j^\varepsilon$ be the last letter of $z$, with $\varepsilon\in \{-1,1\}$ and $n$ be such that $(x_i^{-(m-1)}x_j^{-\varepsilon})^n\in H.$ So, we have that the product $(x_i^m)(x_i^{-(m-1)}x_j^{-\varepsilon})^n=(x_ix_j^{-\varepsilon})(x_i^{-(m-1)}x_j)^{n-1}$ also belongs to $H$. Notice that $z(x_i^m)(x_i^{-(m-1)}x_j^{-\varepsilon})^n z\inv$ is a reduced word labelling a path starting in $v$. Hence,
	\begin{align*}
		2|z|+2+m(n-1)\equiv_k 0. 
	\end{align*}
	But $2|z|\equiv_k -m \equiv_k 0$, so we get that $2\equiv_k 0$, from where it follows that $k\in\{1,2\}$. So the period of any state is either $1$ or $2$. Also, we cannot have both situations happening simultaneously. Indeed, if there are loops of odd length starting in $p$ and all loopes starting in a vertex $q$ have even length, letting $z$ label a path from $q$ to $p$ and $x$ label a closed path of odd length starting in $p$. Then $zxz\inv$ labels a path of odd length starting in $q$, which is absurd.

	If $\mc G$ is not bipartite, then, by Proposition \ref{even bipartite}, there is a cycle of odd length. Hence, not all edges have period $2$, from where it follows that $\mc G$ is aperiodic. If $\mc G$ is bipartite, then all cycles have even length, from where it follows, together with the fact that every edge has period $1$ or $2$, that the period of each edge is $2$. 
\end{proof}

Defining the convention that  $\frac 1 \infty=0$, we can now prove that, taking the density on spheres, the average between the supremum density and the infimum density of a subgroup is always $\frac{1}{[G:H]}$.
\begin{theorem}\label{average finite index}
	Let $L$ be the language of reduced words representing elements in a subgroup $H$ of a nonabelian free group $F_n$. Then $$\frac{\oD(L\mid\Red(X))+D_{\inf}(L\mid \Red(X))}{2}=\frac{1}{[F_n:H]}.$$ Moreover, $D(L\mid \Red(X))$ exists if and only if $H$ has infinite index or has at least a word of odd length.
\end{theorem}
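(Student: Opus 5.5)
Assume first that $H$ has infinite index. I would treat the finitely generated case first. Then $\delta(H)=0$ by Corollary \ref{cor: fi}, so $\oDc(L\mid\Red(X))=0$. Proposition \ref{null cumulative} gives $\oD(L\mid\Red(X))=0$, and Lemma \ref{dod} shows that $D(L\mid \Red(X))$ exists and equals $0$. With the convention $\frac1\infty=0$ the average is then $0$, as claimed. If $H$ is infinitely generated, it need not be rational, and I would instead argue directly with the Stallings graph $\mc G$. For a fixed reduced $w$, choose the finite core subgraph of $\mc G$ carrying all paths of length at most $|w|$ from the basepoint. If some vertex of $\mc G$ is not saturated, this gives a forbidden-subword type bound on the words read from the base. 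Otherwise I would use the same Markov chain argument as below with the number of vertices tending to infinity. I expect this case to be a technical side issue, and I would most likely handle it by approximating $H$ by finitely generated subgroups whose Stallings graphs contain larger and larger balls of $\mc G$.

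Now suppose $[F_n:H]=d<\infty$. Then $\mc G$ is a finite $2n$-regular saturated graph with $d$ vertices. As remarked before Theorem \ref{period 1 2}, $\frac{|L\cap\Sigma^m|}{|\Sigma^m\cap \Red(X)|}$ is the probability that the uniform non-backtracking random walk started at the basepoint $1$ returns to $1$ after $m$ steps. I would encode this walk as a Markov chain $P$ on directed edges: from the edge $e=(u\to v)$ with label $x$, we move to any of the $2n-1$ edges leaving $v$ whose label is not $x^{-1}$. The matrix $P$ is doubly stochastic, since every directed edge has exactly $2n-1$ predecessors. The chain is irreducible, because the walk can go from any edge to any other by a reduced path. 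I would check this using saturation, rank at least $2$, and the connectivity of $\mc G$, by inserting detours to avoid backtracking. Hence the uniform distribution on the $2nd$ directed edges is the unique stationary distribution. The return probability after $m$ steps is a sum, over the $2n$ possible first edges leaving $1$, of $\frac{1}{2n}P^{m-1}(e,\cdot)$ evaluated on the set of edges ending at $1$, and that set has size $2n$.

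If $H$ contains a word of odd length, Theorem \ref{period 1 2} shows that the chain is aperiodic. Then $P^{m}(e,f)\to\frac{1}{2nd}$, so the return probability tends to $2n\cdot\frac{1}{2nd}=\frac1d$. The limit $D(L\mid\Red(X))$ exists and equals $\frac1d$, and the average identity follows trivially. If $H\le F_n^{even}$, then by Proposition \ref{even bipartite} $\mc G$ is bipartite, and by Theorem \ref{period 1 2} the chain has period $2$. The edges split into two cyclic classes, the edges ending in $U$ and the edges ending in $V$, each of size $nd$. By periodic Markov chain theory, $P^{2m}(e,f)\to\frac{2}{2nd}$ when $f$ is in the class that $e$ reaches in an even number of steps, and the limit is $0$ otherwise. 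Returns to $1$ happen only at even times. So the return probability tends to $\frac 2d$ along even $m$ and equals $0$ at odd $m$. Hence $\oD=\frac2d$ and $D_{\inf}=0$, their average is $\frac1d$, and the limit does not exist.

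I expect the main obstacle to be the irreducibility of the edge chain together with the identification of its cyclic classes, and the infinitely generated infinite-index case. For irreducibility I would first try to prove strong connectivity of the non-backtracking edge graph of a connected finite regular graph of degree at least $3$, which is known for graphs of minimum degree at least $2$ that are not cycles. I would then combine this with the period computation of Theorem \ref{period 1 2}.
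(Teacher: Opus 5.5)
For finite-index $H$, and for finitely generated $H$ of infinite index, your argument is the paper's. In the infinite-index case you use Corollary \ref{cor: fi}, Proposition \ref{null cumulative} and Lemma \ref{dod}. In the finite-index case you view the non-backtracking walk on the Stallings graph as a Markov chain on edges, and Theorem \ref{period 1 2} selects the aperiodic or the period-$2$ convergence theorem. Your execution is tighter than the paper's in several places:
\begin{itemize}
\item double stochasticity gives the uniform stationary law directly;
\item you normalise by $|\Sigma^m\cap\Red(X)|$ instead of $|\Sigma^m|$;
\item the value $\frac 2d$ comes out cleanly from the two cyclic classes of size $nd$, whereas the paper's expected-return-time computation is garbled and is stated as an identity for every $k$;
\item you correctly note that irreducibility of the edge chain needs more than connectedness of $\mc G$.
\end{itemize}

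The gap is the case you flag yourself: $H$ infinitely generated of infinite index. The paper's proof does not cover it either. Corollary \ref{cor: fi} assumes $H$ finitely generated, and such an $H$ is never rational (Theorem \ref{AnisimovSeifert}), so Theorem \ref{sinya analogue} is unavailable. None of your three ideas closes this case.
\begin{itemize}
\item Approximating $H$ from inside by finitely generated $K\le H$ only gives $|L_K\cap\Sigma^m|\le|L\cap\Sigma^m|$, which is a lower bound. Letting $K$ grow would require the decay in $m$ to be uniform in $K$, and nothing provides that.
\item Non-saturation of an infinite Stallings graph does not produce a forbidden subword. For $H=\langle a^iba^{-i} : i\geq 1\rangle$ the graph is an $a$-ray with $b$-loops at every vertex except the basepoint, yet every reduced word is a factor of a word of $L$: read it far out on the ray.
\item For a single infinite graph there is no stationary distribution, so the finite-state convergence argument has nothing to converge to.
\end{itemize}

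What does work is your own edge chain, run on the full Schreier coset graph of $H$. That graph is saturated, so $\frac{|L\cap\Sigma^m|}{|\Sigma^m\cap\Red(X)|}$ is still a return probability and $P$ is still doubly stochastic. The chain need not be irreducible (for $H=1$ it never returns), but it has no finite closed class $C$. Let $W$ be the finite set of heads of edges of $C$. Since the graph is infinite and connected, some edge $h$ goes from a vertex $w\in W$ to a vertex outside $W$. This $h$ is not in $C$, and $C$ is closed, so $h$ must be the reverse of every edge of $C$ ending at $w$. Such an edge then starts outside $W$. But every edge of a closed class is preceded by an edge of the class, so it starts in $W$, a contradiction.

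On a recurrent class the counting measure is subinvariant, hence a multiple of the invariant measure, so a positive recurrent class would be finite. Therefore every state is transient or null recurrent, and $P^m(e,f)\to 0$ for all $e,f$. The return probability is a finite combination of such terms, so it tends to $0$. This holds for every infinite-index $H$, with no finite-generation hypothesis.
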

\begin{proof}
	If $H$ has infinite index, then, by Corollary \ref{cor: fi}, $\oDc(L\mid\Red(X))=\delta(H)=0$, so, by Proposition \ref{null cumulative}, $\oD(L\mid\Red(X))=0$. Thus, $D_{\inf}(L\mid \Red(X))=0$, and the equality holds.
	
	If $H$ has finite index, by Theorem \ref{period 1 2}, the non-bactracking random walk on its Stallings graph $\mc G=(V,E)$ defines a Markov chain of period $1$ or $2$. Also, the graph is connected, so the Markov chain is irreducible. 
	
	If it is aperiodic, that is $H$ has a word of odd length, then it converges to its stationary distribution \cite[Theorem 1.8.3]{[Nor97]}. So, in this case, $$\oD(L\mid\Red(X))=D_{\inf}(L\mid \Red(X))=\frac{1}{|V|}=\frac{1}{[F_n:H]},$$
	and the equality holds.
	
	If it is not, then the graph is bipartite, so after an odd number of steps, we are never closing a loop. That means that, if $k$ is odd, then $\frac{|L\cap \Sigma^k|}{|\Sigma^k|}=0$, so $$D_{\inf}(L\mid \Red(X))=\liminf_{k\to \infty}\frac{|L\cap \Sigma^k|}{|\Sigma^k|}=0.$$

	Now, our Markov chain has period 2, so we will use the convergence theorem for periodic Markov chains \cite[Theorem 1.8.5]{[Nor97]}, which yields that, for all $k\in \N$, 
	$$\frac{|\Sigma^{2k}\cap L|}{|\Sigma^{2k}|}=\frac{2}{m},$$ where $m$ is the expected return time to the basepoint. The expected return time of each edge ending in the basepoint is the inverse of its stationary distribution \cite[Proposition 1.14]{[LPW09]}, so $\frac{1}{|E|}=\frac{1}{n|V|}=\frac{1}{n[G:H]}$. Summing over all $n$ edges ending in the basepoint, we have that the expected return time to the basepoint is $\frac{1}{[G:H]}$.
	So, we deduce that 
	$$\frac{|\Sigma^{2k}\cap L|}{|\Sigma^{2k}|}=\frac{2}{[G:H]},$$
	for all $k\in \N$.
	Hence, $$\oD(L\mid \Red(X))=\frac{2}{[G:H]},$$
	and we obtain the desired equality.
\end{proof}

\subsubsection{Weak density}
In \cite{[BPR09]}, the authors give another (weaker) definition of density as the average limit $$d(L)=\lim_{n\to\infty}\frac 1 n \sum_{i=0}^{n-1}\frac{|\Sigma^i\cap L|}{|\Sigma^i|}.$$
This definition of density is more robust than the previous one. On the one hand, if density exists in the usual sense, then it also exists in the weaker sense and it has the same value \cite[Corollary 1.5]{[Mur09]}, but, for example, in \cite[Section 3.3]{[BGONBPP24]} there is an example in the Fibonacci shift where standard density does not converge, while this weaker definition does.

From our previous work, it follows that this notion of density always exists for finite index subgroups of nonabelian free groups and it is indeed $\frac{1}{[G:H]}$.

Define the weak density of a subset $K$ of a group as $G$ as 
$$\Delta(K)=\lim_{n\to \infty}\frac{1}{n}\sum_{i=0}^{n-1}\frac{|B(i)\cap K|}{|B(i)|}.$$
We will define the weak  density of a language $L_1$ relative to a larger language $L_2$ as 
$$d(L_1\mid L_2)=\lim_{n\to \infty}\frac{1}{n}\sum_{i=0}^{n-1}\frac{|\Sigma^i\cap L_1|}{|\Sigma^i\cap L_2|}.$$
As above, if $K$ is a subset of a free group and $L$ is the language of reduced words representing elements in $K$, we have that $d(L\mid \Red(X))=\Delta(K)$.

\begin{theorem}\label{weak density}
	Let $H$ be a subgroup of a nonabelian free group $F_k$. Then 
	\begin{align*}
\Delta(H) = 
\begin{cases}
	0               & \text{if } H \text{ has infinite index} \\
	\frac{1}{[F_k:H]} & \text{if } H \text{ has finite index}
\end{cases}
	\end{align*}
\end{theorem}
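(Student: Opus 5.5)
The plan is to reduce everything to the behaviour of the sphere ratios $a_i=\frac{|\Sigma^i\cap L|}{|\Sigma^i\cap \Red(X)|}$, where $L$ is the language of reduced words representing elements of $H$. I will also handle the ball ratios $b_i=\frac{|B(i)\cap H|}{|B(i)|}=\frac{\sum_{j\le i}|\Sigma^j\cap L|}{\sum_{j\le i}|\Sigma^j\cap\Red(X)|}$, so the argument covers $\Delta(H)$ as literally defined with balls and also $d(L\mid\Red(X))$. The basic tool is the elementary Cesàro fact in two forms:
\begin{itemize}
\item if a sequence converges, its averages converge to the same limit;
\item if its even and odd subsequences converge to $\alpha$ and $\beta$, then its averages $\frac1n\sum_{i<n}$ converge to $\frac{\alpha+\beta}{2}$.
\end{itemize}

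\textbf{Infinite index.} By Corollary \ref{cor: fi}, $\delta(H)=\oDc(L\mid\Red(X))=0$. So $b_i\to 0$, and by Proposition \ref{null cumulative} also $a_i\to 0$. Both Cesàro averages then tend to $0$.

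\textbf{Finite index, non-bipartite case.} Suppose $H$ has a word of odd length. Theorem \ref{average finite index} gives that $D(L\mid\Red(X))$ exists and equals $\frac{1}{[F_k:H]}$. Since $\sum_{i\le n}|\Sigma^i\cap\Red(X)|\big/\sum_{i\le n+1}|\Sigma^i\cap\Red(X)|\to\frac{1}{2k-1}\neq 1$, the Stolz--Cesàro remark from the section on relative densities shows that $D^*$ exists with the same value. Hence both $a_i$ and $b_i$ converge to $\frac1{[F_k:H]}$, and so do their averages.

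\textbf{Finite index, bipartite case.} Here every element of $H$ has even length. Write $I=[F_k:H]$.
\begin{itemize}
\item \emph{Sphere ratios.} Odd terms satisfy $a_{2m+1}=0$. From the periodic convergence theorem used in the proof of Theorem \ref{average finite index}, the even terms satisfy $a_{2m}\to\frac{2}{I}$. The second Cesàro fact then gives $d(L\mid\Red(X))=\frac12\left(\frac2I+0\right)=\frac1I$.
\item \emph{Ball ratios.} Put $r=2k-1$ and $s_j=|\Sigma^j\cap\Red(X)|=2k\,r^{j-1}$. Then
\[b_n=\sum_{j=0}^{n} w^{(n)}_j\,a_{n-j},\qquad w^{(n)}_j=\frac{s_{n-j}}{\sum_{i\le n}s_i}.\]
As $n\to\infty$, $w^{(n)}_j\to w_j:=\frac{r-1}{r}\,r^{-j}$, these weights sum to $1$, and the tails are uniformly geometrically small. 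A dominated-convergence argument then shows that, along even $n$, $b_n\to\frac2I\sum_{j\text{ even}}w_j$, and along odd $n$, $b_n\to\frac2I\sum_{j\text{ odd}}w_j$. (For example, with $k=2$, $r=3$, these limits are $\frac{3}{2I}$ and $\frac{1}{2I}$, matching the values $\frac34$ and $\frac14$ quoted earlier for the index-two subgroup.) Since $\sum_j w_j=1$, the average of the two limits is $\frac1I$. The second Cesàro fact gives $\Delta(H)=\frac1I$.
\end{itemize}

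\textbf{Main obstacle.} I expect the delicate point to be the bipartite case. It rests on the claim that $a_{2m}$ genuinely converges to $\frac{2}{I}$, rather than merely having $\limsup$ equal to $\frac2I$. I would take this from the periodic Markov chain convergence theorem applied to the non-backtracking walk on the edges of the Stallings graph, as in the proof of Theorem \ref{average finite index}, and re-check that the stationary distribution is uniform on edges. The remaining steps are routine $\varepsilon$-management: splitting $b_n$ into a bounded head with $j\le J$ and a geometrically small tail, exactly as in the proof of Lemma \ref{lem: cycredcore}.
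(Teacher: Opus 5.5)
Your proof is correct. For the sphere ratios $a_i$ it is exactly the paper's argument: the same three cases; Corollary~\ref{cor: fi} with Proposition~\ref{null cumulative} for infinite index; strong convergence from Theorem~\ref{average finite index} when $H$ has a word of odd length; and, in the bipartite case, $a_{2m+1}=0$ and $a_{2m}\to\frac{2}{I}$ followed by an even/odd Ces\`aro split.

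The genuine difference is that you also treat the ball ratios $b_n$. The paper defines $\Delta(H)$ with balls, but then asserts $d(L\mid\Red(X))=\Delta(H)$ ``as above'' without argument. It only averages sphere ratios, so its proof never actually handles $|B(i)\cap H|/|B(i)|$.

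Your decomposition $b_n=\sum_j w^{(n)}_j a_{n-j}$, with the asymptotically geometric kernel $w_j=\frac{r-1}{r}r^{-j}$, fills exactly that step. It yields the even/odd limits $\frac{2}{I}\cdot\frac{r}{r+1}$ and $\frac{2}{I}\cdot\frac{1}{r+1}$, recovering the paper's $\frac34$ and $\frac14$ when $k=I=2$, and their mean is $\frac1I$. The same kernel argument shows more generally that $(b_n)$ and $(a_n)$ have the same Ces\`aro mean whenever the latter exists. So the paper's route is shorter, while yours actually proves the statement for $\Delta$ as defined.

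Your concern about $a_{2m}$ is justified. The proof of Theorem~\ref{average finite index} states it as an exact equality for every even length, with a muddled return-time computation. Your plan is sound: the non-backtracking chain on pairs (vertex, last letter) of the finite Schreier graph is irreducible (as $2k\ge 4$) and doubly stochastic. Its stationary law is therefore uniform and gives total mass $\frac1I$ to the $2k$ basepoint states, which the period-$2$ convergence theorem doubles along even times.

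One caveat, shared with the paper: Corollary~\ref{cor: fi} is only established for finitely generated $H$. An infinitely generated subgroup (necessarily of infinite index) is therefore not covered by your infinite-index case. Either read the statement under the subsection's standing finite-generation hypothesis, or add a separate argument that $a_n\to 0$ for every infinite-index $H$. For instance, the non-backtracking walk on an infinite Schreier graph has the uniform measure on (vertex, last letter) pairs as an infinite invariant measure, so it cannot be positive recurrent.
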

\begin{proof}
	If $H$ has infinite index,  then, by Corollary \ref{cor: fi}, $\oDc(L\mid\Red(X))=\delta(H)=0$, so, by Proposition \ref{null cumulative}, $\oD(L\mid\Red(X))=0$. Thus, $D_{\inf}(L\mid \Red(X))=0$, and there is convergence in the strong sense (and so in the weak sense).
	
	If $H$ has finite index, then either it has a word of odd length, in which case, by the proof of Theorem \ref{average finite index}, there is convergence in the strong sense, and so there is also convergence in the weak sense or it is contained in $F^{even}$, in which case there is no strong convergence.
	
	So, assume that $H$ has finite index and it is contained in $F^{even}$, and let $L$ be the language of reduced words representing elements in $H$. We have that $\frac{|\Sigma^i\cap L|}{|\Sigma^i\cap \Red(X)|}=0$ if $i$ is odd and the sequence $\left(\frac{|\Sigma^{2i}\cap L|}{|\Sigma^{2i}\cap \Red(X)|}\right)_{i\in \N}$ converges to $\frac{2}{[F_k:H]}$ by the proof of Theorem \ref{average finite index}.

Let $a_i = \frac{|\Sigma^i \cap L|}{|\Sigma^i \cap \Red(X)|}$. We wish to compute $\Delta(H) = \lim_{n\to\infty} \frac{1}{n} \sum_{i=0}^{n-1} a_i$. Consider the partial means for an even number of terms $n=2m$:
\begin{align*}
	\frac{1}{2m} \sum_{i=0}^{2m-1} a_i = \frac{1}{2m} \left( \sum_{j=0}^{m-1} a_{2j} + \sum_{j=0}^{m-1} a_{2j+1} \right) = \frac{1}{2m} \sum_{j=0}^{m-1} a_{2j} + 0 = \frac{1}{2} \left( \frac{1}{m} \sum_{j=0}^{m-1} a_{2j} \right).
	\end{align*}
	Since $a_{2j} \to \frac{2}{[F_k:H]}$, by \cite[Corollary 1.5]{[Mur09]}, its average also converges to the same limit.
	 Thus,$$\lim_{m\to\infty} \frac{1}{2m} \sum_{i=0}^{2m-1} a_i = \frac{1}{2} \cdot \frac{2}{[F_k:H]} = \frac{1}{[F_k:H]}.$$For the case of an odd number of terms $n=2m+1$, we have:$$\frac{1}{2m+1} \sum_{i=0}^{2m} a_i = \frac{2m}{2m+1} \left( \frac{1}{2m} \sum_{i=0}^{2m-1} a_i \right) + \frac{a_{2m}}{2m+1}.$$As $m \to \infty$, the factor $\frac{2m}{2m+1}$ tends to $1$, and since the sequence $(a_i)$ is bounded, the term $\frac{a_{2m}}{2m+1}$ tends to $0$. Therefore, the limit is the same. We conclude that $\Delta(H) = \frac{1}{[F_k:H]}$.\end{proof}

 \section{Further work}
 
 We now present two questions arising from this work.

The first question concerns Theorem \ref{2cosets} (or even Lemma \ref{dod}) for the  $\delta_{\inf}$ case. Concretely, we ask the following question:

\begin{question}
	What are the rational subsets $S$ of a nonabelian free group such that $\delta_{\inf}(S)>0$? Is there a rational subset $S$ such that $\delta_{\inf}(S)=0$ but $\delta(S)>0$?
\end{question}

Also, natural generalizations to other classes of groups (for example hyperbolic or automatic groups) might be worth considering. For instance, some automatic structure with uniqueness can represent the language which densities are calculated relative to. Also, there are Stallings automata for finite index subgroups of automatic groups with similar properties as the Stallings automata for finite index subgroups of the free group (see \cite[Theorem 6.12]{[KMW17]}).

\begin{question}
	Is it possible to obtain similar results for other classes of groups, such as hyperbolic or automatic groups?
\end{question}

 \section{Acknowledgements}
The author thanks Corentin Bodart for many suggestions on a previous draft of this paper. Particularly, the author is grateful for the connection with non-backtracking random walks and symbolic dynamics that led to Theorems \ref{thm:shifts} and \ref{average finite index}. The author is also grateful to Herman Goulet-Ouellet for many comments and references, which very significantly improved the paper.
This work research was supported by national funds through the Fundação para
 a Ciência e Tecnologia, FCT, under the project UID/04674/2025. 

\bibliographystyle{plain}
\bibliography{Bibliografia}

\clearpage

 \end{document}